\documentclass{amsart}

\usepackage{epstopdf}
\usepackage{subfigure}

\theoremstyle{plain}
\newtheorem{theorem}{Theorem}[section]

\newtheorem{lemma}[theorem]{Lemma}

\newtheorem{prop}[theorem]{Proposition}

\theoremstyle{definition}

\newtheorem{rema}[theorem]{Remark}
\newtheorem{defi}[theorem]{Definition}

\theoremstyle{remark}

\newcommand{\al}{\alpha}
\newcommand{\be}{\beta}
\newcommand{\ga}{\gamma}

\newcommand{\ep}{\epsilon}

\newcommand{\pbp}{\partial \bar{\partial}}
\newcommand{\ddc}{d d^{c}}

\newcommand{\tr}{\operatorname{tr}}

\begin{document}

\title{A priori estimates for a generalised Monge-Amp\`ere PDE on some compact K\"ahler manifolds}

\author[Pingali]{Vamsi P. Pingali}
\address{Department of Mathematics\\
412 Krieger Hall, Johns Hopkins University,\\ Baltimore, MD 21218, USA}
\email{vpingali@math.jhu.edu}

\maketitle

\begin{abstract}
We study a fully nonlinear PDE involving a linear combination of symmetric polynomials of the K\"ahler form on a K\"ahler manifold. A $C^0$ \emph{a priori} estimate is proven in general and a gradient estimate is proven in certain cases. Independently, we also provide a method-of-continuity proof via a path of K\"ahler metrics to recover the existence of solutions in some of the known cases. Known results are then applied to an analytic problem arising from Chern-Weil theory and to a special Lagrangian-type equation arising from mirror symmetry.
\end{abstract}

\section{Introduction}
Consider the following fairly general equation on a compact K\"ahler manifold $(X,\omega)$.
\begin{gather}
\displaystyle \omega_{\phi} ^n = \sum_{k=1}^{n} \alpha_k \wedge \omega _{\phi}^{n-k}, \label{maineq}
\end{gather}
where $\alpha _k \geq 0$ are closed smooth positive $(k,k)$-forms such that $\alpha _{k_0} >0$ for at least one $k_0$ (in particular, $k_0$ can be equal to $n$), $\phi$ is a smooth function such that $\omega _{\phi} = \omega + \sqrt{-1} \pbp \phi >0$, and $\omega$ satisfies the so-called ``cone condition" $\displaystyle n\omega ^{n-1} - \sum_{k=1}^{n} (n-k) \alpha_k \wedge \omega ^{n-k-1} > 0$ and the consistency condition $\displaystyle \int \omega ^n = \sum _{k=1}^n \int \alpha _k \wedge \omega ^{n-k}$. Our notion of positivity of $(p,p)$-forms is explained in section \ref{setup}. \\
\indent Notice that if $\alpha _k = \delta _{kn} \eta$ then equation \ref{maineq} boils down to the Calabi-Yau theorem \cite{Yau}. In its full generality, equation \ref{maineq} and its cousins arise in the representation problem of Chern-Weil theory \cite{MAtorus}, canonical metrics in K\"ahler geometry \cite{Sze,Weisun,InvHess,Hess1,Hess2,XX,Coll}, symplectic geometry \cite{yanirub} and mirror symmetry in string theory \cite{special,leung}. So far, in most places where it has been studied a flow technique (like the J-flow) was used to study it. In Wei Sun's paper \cite{Weisun} the method of continuity was used but it used non-K\"ahler metrics to prove openness. In this paper we aim to prove \emph{a priori} estimates and solve \ref{maineq} in some cases. \\
\indent Our first result is an \emph{a priori} $C^0$ estimate on $\phi$ under general assumptions (proposition \ref{uniformestimate}). We remark that a paper of Sz\'ekelyhidi \cite{Sze} deals with an equation that overlaps with the generalised Monge-Amp\`ere equation in some cases (like the J-flow case). In \cite{Sze} the ABP estimate is used to prove a $C^0$-estimate but we use Yau's Moser iteration argument. In the course of setting up the method of continuity we indicate a proof of a theorem (theorem $1.4$ in \cite{Weisun} when $\psi$ is a constant which is also a theorem of Fang-Lai-Ma \cite{InvHess})  in remark \ref{Weithmpf} using a method of continuity that uses only K\"ahler metrics. This may potentially be of independent interest. Then we proceed to prove a gradient estimate for \ref{maineq} in a special case.
\begin{theorem}\label{mainthm}
Let $(X,\omega)$ be a compact K\"ahler manifold. Assume that $\chi$ is another K\"ahler form such that $\chi$ has non-negative bisectional curvature. Assume that $\alpha = f \chi^{n-1} + \left(\sqrt{-1}\right)^{n-1} (-1)^{n(n-1)/2}\displaystyle \sum _{a=1} ^N f_a \Phi _a \wedge \bar{\Phi}_a$ is a closed smooth $(n-1,n-1)$-form ($N$ is an arbitrary natural number) which satisfies $-C\alpha  \leq  \nabla _X \alpha \leq C \alpha$ for all real $\chi$-unit vectors $X$, where $f \geq 0, f_a \geq 0$ are smooth functions, $\Phi _a$ are smooth $(n-1,0)$-forms, $C>0$ is a constant, and $\nabla$ is the canonical connection induced by $\chi$. Let $\eta >0$ be an $(n,n)$-form. Assume that $\displaystyle \int \omega ^n = \int \alpha \wedge \omega + \int \eta$ and that $n\omega ^{n-1} - \alpha > 0$. For a smooth function $\phi$, denote $\omega _{\phi} = \omega + \sqrt{-1} \pbp \phi$. Consider the equation.
\begin{gather}
\omega _{\phi} ^{n} = \alpha \wedge \omega _{\phi} + \eta. \label{maineqthm}
\end{gather}
The following hold.
\begin{enumerate}
\item \emph{Gradient estimate} A smooth solution $\phi$ of equation \ref{maineqthm} satisfies $\Vert \phi \Vert _{C^1}\leq C$ where $C$ depends only on the coefficients.
\item \emph{Partial Laplacian estimate and existence}  If $\alpha$ is parallel with respect to $\chi$ then $\frac{\alpha \wedge \omega _{\phi}}{\chi ^n} \leq C$. In addition, if $\alpha >0$, i.e., if $f>0$, then a unique smooth solution exists satisfying $\omega _{\phi} >0$ and $n\omega _{\phi}^{n-1}  -\alpha >0$.
\end{enumerate}
\end{theorem}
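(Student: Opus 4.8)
The overall plan is to establish the two \emph{a priori} estimates by the maximum principle and then feed them into a method of continuity, as set up in remark \ref{Weithmpf}. Writing \ref{maineqthm} as $P(\phi) := \op^n - \al\wedge\op - \eta = 0$, its linearisation at a solution is the scalar operator $L\psi = \frac{(n\op^{n-1}-\al)\wedge\sqrt{-1}\pbp\psi}{\chi^n}$, and the cone condition $n\omega^{n-1}-\al>0$ (propagated to $n\op^{n-1}-\al>0$ along the path) makes $L$ uniformly elliptic with no zeroth-order term. The $C^0$ bound is already in hand from proposition \ref{uniformestimate}, so both items reduce to controlling derivatives of $\phi$ through $L$.

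For item $1$, the plan is to run a maximum-principle argument on the test function $G = \log|\partial\phi|^2_\chi + \gamma(\phi)$, where $\gamma$ is an auxiliary function of the bounded quantity $\phi$ chosen at the end to absorb error terms. Differentiating \ref{maineqthm} once in a $\chi$-unit direction $X$ gives $L(X\phi) = (\nabla_X\al)\wedge\op/\chi^n + (\text{terms in }\nabla\om,\nabla\eta)$, so the hypothesis $-C\al\leq\nabla_X\al\leq C\al$ is precisely what converts the inhomogeneous term $(\nabla_X\al)\wedge\op$ into something controlled by $\al\wedge\op$, which is itself bounded through the equation. Applying $L$ to $|\partial\phi|^2_\chi$ then produces a good positive term of the shape $\langle n\op^{n-1}-\al,\ \sqrt{-1}\nabla\partial\phi\wedge\overline{\nabla\partial\phi}\rangle$ (a genuine square of the full complex Hessian of $\phi$, positive by ellipticity), the third-order terms just discussed, and curvature commutators. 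The assumption that $\chi$ has non-negative bisectional curvature is used to give the correct sign to these commutators, as in Yau's and Weinkove's gradient estimates; choosing $\gamma$ suitably makes the good term dominate at an interior maximum of $G$, forcing a bound on $|\partial\phi|_\chi$.

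For item $2$, assume first that $\al$ is parallel, so $\nabla\al=0$ and every $\nabla\al$ error term disappears. I would bound the weighted trace $u := \frac{\al\wedge\op}{\chi^n}$ by applying $L$ to $\log u - A\phi$ for a large constant $A$. Because $\al$ is parallel, $Lu$ contains only a positive second-order square together with curvature terms of definite sign (again using the bisectional curvature of $\chi$), while $-A\,L\phi = -A\,(n\op^{n-1}-\al)\wedge(\op-\om)/\chi^n$ supplies, for $A$ large, a strictly negative contribution that cannot be balanced at a maximum unless $u$ is bounded; this yields $\frac{\al\wedge\op}{\chi^n}\leq C$. If moreover $f>0$, then $\al>0$ and this weighted trace dominates a constant multiple of $\tr_\chi\op$, so it is a genuine Laplacian estimate. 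Combined with the equation, which bounds $\op^n/\chi^n$ from above (via the partial Laplacian bound) and from below (since $\al\wedge\op\geq0$ forces $\op^n\geq\eta>0$), the upper bound on $\tr_\chi\op$ then bounds each eigenvalue of $\op$ above and away from $0$, giving uniform ellipticity. Evans--Krylov upgrades this to a $C^{2,\be}$ estimate, Schauder bootstraps to $C^\infty$, and the method of continuity (openness from invertibility of $L$ modulo constants, closedness from the \emph{a priori} estimates, along the K\"ahler path of remark \ref{Weithmpf}) produces a smooth solution. Uniqueness follows from the maximum principle applied to the difference of two solutions, using the monotonicity of $P$ in $\sqrt{-1}\pbp\phi$ guaranteed by $n\op^{n-1}-\al>0$.

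The step I expect to be the main obstacle is the gradient estimate: taming the third-order terms generated by differentiating the fully nonlinear equation. Unlike the pure Monge--Amp\`ere case, the linear term $\al\wedge\op$ couples derivatives of $\al$ to derivatives of $\op$, and it is exactly to control this coupling that the structural hypothesis $-C\al\leq\nabla_X\al\leq C\al$ together with the decomposition $\al=f\chi^{n-1}+(\sqrt{-1})^{n-1}(-1)^{n(n-1)/2}\sum_a f_a\Phi_a\wedge\bar\Phi_a$ into manifestly non-negative pieces is needed; arranging that every error term is dominated by the single good second-order square, with no residual curvature of the wrong sign, is where the argument will be most delicate.
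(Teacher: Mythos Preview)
Your overall architecture is right and matches the paper: Blocki-type test function $\log|\nabla\phi|^2_\chi - \gamma(\phi)$ for the gradient, a linear test function $\frac{\alpha\wedge\op}{\chi^n}-\mu\phi$ for the partial Laplacian, then Evans--Krylov and continuity. But there is a genuine gap at the point where you write ``choosing $\gamma$ suitably makes the good term dominate at an interior maximum of $G$, forcing a bound on $|\partial\phi|_\chi$.'' In this equation the linearisation $\tilde L^{k\bar l}=\frac{\op^n}{n!}L^{k\bar l}= -A^{k\bar l}+\delta^{k\bar l}\prod_{j\neq k}\lambda_j$ is \emph{not} a priori uniformly positive, so the ``good term'' $Q\,\tilde L^{k\bar l}\phi_k\phi_{\bar l}$ cannot be assumed to dominate $|\nabla\phi|^2$. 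What the maximum-principle inequality actually yields (after throwing away the genuinely nonnegative Hessian squares and using the curvature sign) is only a bound on $\frac{n\op^{n-1}\wedge\omega}{\op^n}$ at the maximum point, together with a competition between $(n-1)\gamma'\,A^{k\bar l}(\op)_{k\bar l}$ and $\gamma'\,\frac{n\op^{n-1}\wedge\omega}{\op^n}\bigl(h+A^{k\bar l}(\op)_{k\bar l}\bigr)$.

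The missing ingredient is an auxiliary lemma (the paper's Lemma~\ref{implem}): if $\op\geq R\omega$ and the largest eigenvalue $\la_1\to\infty$, then the cone condition $n\omega^{n-1}-\alpha>0$ forces, via Hadamard's inequality and the equation, $\la_2\cdots\la_n\to A^{1\bar1}<\omega_{2\bar2}\cdots\omega_{n\bar n}$, whence $\frac{n\op^{n-1}\wedge\omega}{\op^n}=\sum_k\frac{\omega_{k\bar k}}{\la_k}\geq (n-1)+\ep$ for a uniform $\ep>0$. It is this $\ep$ that closes both estimates: in the gradient argument it makes the coefficient in front of $A^{k\bar l}(\op)_{k\bar l}$ positive, bounding $\op^n$ and hence $\tr_\chi\op$ at the maximum (so $\tilde L$ becomes uniformly elliptic there and you win); in the partial Laplacian argument it turns the inequality $\frac{C}{\mu}\geq \frac{n\op^{n-1}\wedge\omega}{\op^n}-(n-1)\frac{\alpha\wedge\op}{\op^n}$ into $\frac{C}{\mu}\geq\ep$, a contradiction for large $\mu$. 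Without this lemma neither maximum-principle computation closes, and your sketch gives no indication of how the cone condition enters beyond ellipticity of $L$; that is precisely the delicate point of the proof.
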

\begin{rema}\label{mainthemrem}
\indent The existence part of theorem \ref{mainthm} actually follows from a far more general theorem of Wei Sun \cite{Weisun}. However, the \emph{a priori} estimates are new in the case of $\alpha$ being degenerate. This is perhaps the main point of the theorem. The assumptions of theorem \ref{mainthm} might look restrictive but actually even as stated the technique used to prove them might potentially be useful in \cite{yanirub} where similar assumptions are in force (essentially \cite{yanirub} deals with domains in $\mathbb{R}^n$ but the difficulty is that the equation there is degenerate elliptic).
\end{rema}
\indent  In situations involving equations like \ref{maineq} dealing with the J-flow or in the special case of \ref{maineq} mentioned in remark \ref{Weithmpf} one can actually avoid the gradient estimate by proving the laplacian estimate directly. In fact, thanks to the work of Tosatti and Weinkove \cite{TWb}, just proving the estimate $\Delta \phi \leq C e^{A(\phi - \inf \phi)}$ is enough to guarantee a $C^2$ bound on $\phi$. Usually the technique behind proving such estimates is to use the maximum principle on an appropriately chosen function. For instance, one choice \cite{Weisun} is $\psi = e^w (\Delta_{\chi} \phi + \tr_{\chi} \omega)$ where $w = - A \phi + f(\phi)$ is chosen judiciously. This method was pioneered by Aubin \cite{Aub} and Yau \cite{Yau}. The major difficulty here is that in general, equation \ref{maineq} is not a symmetric polynomial in the hessian. This problem is exacerbated if we allow $\alpha_k$ to be degenerate. Therefore it is not clear that some inequalities in the spirit of \cite{Weisun,Coll,InvHess} work in this setting.\\
\indent Independently, we apply the main result in \cite{Weisun} to prove two theorems. The first one deals with Chern-Weil theory.
\begin{theorem}\label{Coro1}
Let $(V,h_0)$ be a hermitian rank-$k$ holomorphic vector bundle over a compact K\"ahler manifold $(X,\omega)$. Denote the curvature of the Chern connection of $h_0$ by $F_0$ and define $\Theta _0 = \frac{\sqrt{-1}F_0}{2\pi}$. Let $\eta$ be an $(n,n)$-form on $X$ representing the top Chern character class, i.e., $[\eta] =\left[ \mathrm{tr}\left (\Theta _0 \right)^n \right ]$. Define the forms $\alpha _i$ inductively according to
\begin{gather}
\alpha _ 1 = n \left(\omega - \frac{1}{k}\mathrm{tr}(\Theta _0)\right) \nonumber \\
\alpha _ p = - \dbinom{n}{p}  \frac{1}{k}\tr(\Theta _0)^ p + \dbinom{n}{p} \omega ^p - \displaystyle \sum _{i=1} ^{p-1} \dbinom{n-i}{p-i} \alpha _ i  \omega ^{p-i} \ \forall \ 2\leq p \leq n-1 \nonumber \\
\alpha _n = \frac{\eta}{k} -  \tr(\Theta _0)^ n +  \omega ^n - \displaystyle \sum _{i=1} ^{n-1}  \alpha _ i  \omega ^{n-i} .\label{Cherneq}
\end{gather}
Assume that there exists a hermitian metric $\chi$, constants $c_i \geq 0$, and a smooth function $\psi$ such that $\alpha _i = c_i \chi ^i \psi$ and $\sum c_i >0$. Also assume that $\omega$ satisfies $n\omega ^{n-1} - \displaystyle \sum _{i=}^{n-1}(n-i)\alpha _i \omega ^{n-i-1} >0$. Then there exists a smooth metric $h=h_0 e^{-2\pi \phi}$, unique upto constant multiples such that the top Chern character form of $h$ is $\eta$, i.e., $\eta = \mathrm{tr}\left (\Theta _{h} \right)^n $.
\end{theorem}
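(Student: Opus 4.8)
The plan is to translate the Chern--Weil representation problem into the generalised Monge--Amp\`ere equation \ref{maineq} and then to invoke the existence theorem of Wei Sun \cite{Weisun}, checking its hypotheses one at a time. Throughout I read the top Chern character form $\tr(\Theta_h)^n$ as the trace of the $n$-th power $\tr(\Theta_h^n)$ (matrix multiplication combined with wedge), consistent with the class condition $[\eta]=[\tr(\Theta_0)^n]$.

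First I would record how this form transforms under the conformal change $h = h_0 e^{-2\pi\phi}$. Working in a local holomorphic frame, $F_h = \bar{\partial}(h^{-1}\partial h)$, and a direct computation gives $F_h = F_0 + 2\pi\,\pbp\phi\cdot\mathrm{Id}$, so that $\Theta_h = \Theta_0 + \sqrt{-1}\,\pbp\phi\cdot\mathrm{Id}$. Since the added term is a scalar $(1,1)$-form times the identity it commutes with $\Theta_0$, and the binomial theorem gives
\[
\tr(\Theta_h^n) = \sum_{j=0}^{n}\binom{n}{j}\,\tr(\Theta_0^{\,n-j})\wedge\left(\sqrt{-1}\,\pbp\phi\right)^{j}.
\]
Substituting $\sqrt{-1}\,\pbp\phi = \op - \om$, expanding each $(\op-\om)^{j}$ binomially, and collecting by powers of $\op$ turns the target $\tr(\Theta_h^n) = \eta$ into an equation of the shape \ref{maineq}.

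Second, I would verify that the inductive prescription \ref{Cherneq} is exactly the change of variables effecting this collection. The coefficient of $\op^{n}$ in the expansion is $\tr(\mathrm{Id}) = k$ (it comes solely from the $j=n$ summand $k(\op-\om)^{n}$), which is the source of the normalisation by $1/k$ throughout \ref{Cherneq}; dividing the relation $\tr(\Theta_h^n)=\eta$ by $k$ puts it in the form $\op^{n} = \frac{\eta}{k} - \frac{1}{k}\sum_{p=1}^{n}C_p\wedge\op^{\,n-p}$, where $C_p$ is the fixed $(p,p)$-form gathering all monomials carrying $\op^{\,n-p}$. The pure-curvature part of $C_p$ is the $j=n-p$ summand $\binom{n}{p}\tr(\Theta_0^{\,p})$, which reproduces the leading term $-\binom{n}{p}\tfrac{1}{k}\tr(\Theta_0)^{p}$ of $\alpha_p$, while the recursive tail $\sum_{i=1}^{p-1}\binom{n-i}{p-i}\alpha_i\wedge\om^{p-i}$ precisely accounts for the mixed monomials $\tr(\Theta_0^{\,n-j})\wedge\om^{\,j-n+p}$ with $n-p<j\le n$. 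Establishing that the recursion reproduces $\alpha_p = -\tfrac{1}{k}C_p$ for $p<n$ and $\alpha_n = \tfrac{\eta}{k}-\tfrac{1}{k}C_n$ --- an induction on $p$ powered by the Vandermonde identity $\binom{n}{j}\binom{j}{n-p} = \binom{n}{p}\binom{p}{n-j}$ --- is the main obstacle; everything afterwards is an application of known results.

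Finally I would check the hypotheses of \cite{Weisun} for \ref{maineq}. The positivity $\alpha_i\ge 0$ and, crucially, the simultaneous diagonalisability that renders the operator a symmetric function of the eigenvalues are guaranteed by the standing assumption $\alpha_i = c_i\chi^i\psi$ with $c_i\ge 0$ and $\sum_i c_i>0$; the cone condition $n\om^{n-1}-\sum_i(n-i)\alpha_i\wedge\om^{n-i-1}>0$ is assumed outright. The consistency condition $\int_X\om^{n} = \sum_i\int_X\alpha_i\wedge\om^{n-i}$ is the cohomological shadow of the equation: integrating \ref{maineq} and using that the $\alpha_i$ are closed with $[\op]=[\om]$ reduces it to $\int_X\tr(\Theta_0^{n})=\int_X\eta$, which is precisely the hypothesis $[\eta]=[\tr(\Theta_0)^n]$ (a conformal change does not alter the Chern character class). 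Sun's theorem then yields a smooth $\phi$, unique up to an additive constant, solving \ref{maineq}; setting $h=h_0e^{-2\pi\phi}$ produces a metric with $\tr(\Theta_h^n)=\eta$, and since $\op$, hence $\Theta_h$, depends only on $\sqrt{-1}\,\pbp\phi$, the metric is unique up to a constant multiple.
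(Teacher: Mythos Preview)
Your proposal is correct and follows essentially the same route as the paper: rewrite $\tr(\Theta_h^n)=\eta$ as a generalised Monge--Amp\`ere equation in $\op$ via the binomial expansion of $(\Theta_0+\sqrt{-1}\pbp\phi\cdot\mathrm{Id})^n$, identify the coefficients with the $\alpha_i$ of \ref{Cherneq}, and then apply Sun's theorem \cite{Weisun}. Your write-up is in fact more explicit than the paper's on two points --- the Vandermonde-type induction verifying that the recursion \ref{Cherneq} reproduces the coefficients of the expansion, and the derivation of the consistency condition $\int\om^n=\sum_i\int\alpha_i\wedge\om^{n-i}$ from the cohomological hypothesis $[\eta]=[\tr(\Theta_0)^n]$ --- whereas the paper simply expands both $\tr(\Theta_h^n)=\eta$ and $\om_\phi^n=\psi\sum c_i\chi^i\om_\phi^{n-i}$ in powers of $\sqrt{-1}\pbp\phi$ and says ``comparing the two equations we get the desired result.'' The only nuance the paper adds that you gloss over is that Sun's theorem, as stated, produces a solution pair $(\phi,b)$ with an \emph{a priori} unknown multiplicative constant $e^b$; the paper observes that the integral consistency forces $b=0$, which is exactly what your consistency check supplies.
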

\noindent Some examples of the applicability of a very restricted version of theorem \ref{Coro1} are given in \cite{MAtorus}. \\
\indent The second one deals with a special-Lagrangian type equation motivated from mirror symmetry.
\begin{theorem}\label{Coro2}
Let $L$ be a holomorphic line bundle over a compact K\"ahler manifold $(X,\omega)$. Let $\hat{\theta}$ be defined by the equation $Im([\omega] +2\pi \sqrt{-1} c_1 (L))^3 = \tan (\hat{\theta}) Re ([\omega] + 2\pi \sqrt{-1} c_1 (L))^3$. Assume that $\tan (\hat{\theta})>0$. Also assume that there exists a metric $h_0$ on $L$ whose curvature $F_0$ is such that the $(1,1)$-form $\Omega = \sqrt{-1}F_0-\omega \tan (\hat{\theta})$ satisfies
\begin{enumerate}
\item $\Omega >0$, and
\item $\Omega ^2 - \omega ^2 \sec ^2{\hat{\theta}} >0$.
\end{enumerate}
Then there exists a smooth metric $h=h_0 e^{-\phi}$, unique upto constant multiples satisfying
\begin{gather}
Im \left(\omega - F_{\phi}\right)^3 = \tan (\hat{\theta}) Re \left(\omega - F_{\phi}^3 \right ). \label{jacobeq}
\end{gather}
\end{theorem}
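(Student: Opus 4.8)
The plan is to reduce the special-Lagrangian-type equation \ref{jacobeq} to a single instance of the generalised Monge--Amp\`ere equation \ref{maineq} (with $n=3$) and then to quote the existence theorem of Wei Sun \cite{Weisun}; all the real work is the algebraic reduction and the verification that the hypotheses of \cite{Weisun} are exactly the three hypotheses at hand. First I would set $\rho_{\phi}=\sqrt{-1}F_{\phi}=\sqrt{-1}F_0+\sqrt{-1}\pbp\phi$, a real $(1,1)$-form, so that $\omega-F_{\phi}=\omega+\sqrt{-1}\rho_{\phi}$ is a complex $(1,1)$-form with real part $\omega$ and imaginary part $\rho_{\phi}$. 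Expanding the cube gives
\begin{gather*}
\left(\omega+\sqrt{-1}\rho_{\phi}\right)^3=\left(\omega^3-3\,\omega\wedge\rho_{\phi}^2\right)+\sqrt{-1}\left(3\,\omega^2\wedge\rho_{\phi}-\rho_{\phi}^3\right),
\end{gather*}
so that \ref{jacobeq} is the pointwise identity $3\,\omega^2\wedge\rho_{\phi}-\rho_{\phi}^3=\tan\hat{\theta}\,\bigl(\omega^3-3\,\omega\wedge\rho_{\phi}^2\bigr)$.

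Next I would introduce the shifted form $\Omega_{\phi}=\rho_{\phi}-\tan\hat{\theta}\,\omega=\Omega+\sqrt{-1}\pbp\phi$, which is precisely the $\pbp$-deformation of the background $(1,1)$-form $\Omega$ from the hypotheses. Substituting $\rho_{\phi}=\Omega_{\phi}+\tan\hat{\theta}\,\omega$ into the cubic and collecting by powers of $\Omega_{\phi}$, the coefficient of $\omega\wedge\Omega_{\phi}^2$ cancels and, using $1+\tan^2\hat{\theta}=\sec^2\hat{\theta}$, one is left with
\begin{gather*}
\Omega_{\phi}^3=3\sec^2\hat{\theta}\;\omega^2\wedge\Omega_{\phi}+2\tan\hat{\theta}\sec^2\hat{\theta}\;\omega^3.
\end{gather*}
This is exactly \ref{maineq} (equivalently \ref{maineqthm}) for the background metric $\Omega$ in place of $\omega$, with $\alpha_1=0$, $\alpha_2=3\sec^2\hat{\theta}\,\omega^2$ and $\eta=\alpha_3=2\tan\hat{\theta}\sec^2\hat{\theta}\,\omega^3$. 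The vanishing of the $\Omega_{\phi}^2$ term is the clean feature that makes the reduction land in this form.

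I would then check each hypothesis of \cite{Weisun} for this reduced equation. Positivity of the background, $\Omega>0$, is hypothesis (1); the forms $\alpha_2,\alpha_3$ are positive because $\omega$ is K\"ahler and $\tan\hat{\theta}>0$, so at least one is strictly positive. For $n=3$ the cone condition collapses (the $k=1$ and $k=3$ terms vanish) to $3\Omega^2-\alpha_2=3\bigl(\Omega^2-\sec^2\hat{\theta}\,\omega^2\bigr)>0$, which is precisely hypothesis (2). The only remaining requirement is the consistency identity $\int\Omega^3=\int\alpha_2\wedge\Omega+\int\eta$; since every form here is closed, this is an identity among cup products of $W=[\omega]$ and $R=2\pi c_1(L)=[\sqrt{-1}F_0]$, and a direct expansion shows it is equivalent to $3W^2R-R^3=\tan\hat{\theta}\,(W^3-3WR^2)$, i.e. to $\mathrm{Im}(W+\sqrt{-1}R)^3=\tan\hat{\theta}\,\mathrm{Re}(W+\sqrt{-1}R)^3$, which is exactly the cohomological equation defining $\hat{\theta}$. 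Thus consistency is not an extra assumption but is forced by the definition of $\hat{\theta}$.

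Finally, applying the existence result of \cite{Weisun} produces a smooth $\phi$ with $\Omega_{\phi}>0$ solving the reduced equation; unwinding the substitution shows $h=h_0e^{-\phi}$ satisfies \ref{jacobeq}, and uniqueness up to an additive constant in $\phi$ (equivalently a constant multiple of $h$) follows from the uniqueness in \cite{Weisun} together with the fact that $\pbp\phi$ determines $\phi$ up to a constant on compact $X$. I expect the main obstacle to be purely one of bookkeeping rather than analysis: pinning down the sign and normalisation conventions relating $F_{\phi}$, $\rho_{\phi}$ and $\Omega_{\phi}$ so that the deformation reads $\Omega_{\phi}=\Omega+\sqrt{-1}\pbp\phi$, and confirming that the three displayed hypotheses of \cite{Weisun} coincide termwise with hypotheses (1), (2) and the definition of $\hat{\theta}$. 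Once these are fixed the reduction is an exact equivalence and the conclusion is immediate.
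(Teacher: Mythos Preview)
Your proposal is correct and follows essentially the same route as the paper: both perform the substitution $\Omega_{\phi}=\sqrt{-1}F_{\phi}-\omega\tan\hat{\theta}$, arrive at the cubic $\Omega_{\phi}^3=3\sec^2\hat{\theta}\,\omega^2\wedge\Omega_{\phi}+2\tan\hat{\theta}\sec^2\hat{\theta}\,\omega^3$, identify hypotheses (1) and (2) with the positivity and cone conditions, and then invoke Wei Sun's theorem. Your write-up is in fact more complete than the paper's: you make explicit that the consistency condition $\int\Omega^3=\int\alpha_2\wedge\Omega+\int\eta$ is exactly the cohomological identity defining $\hat{\theta}$, a point the paper leaves implicit.
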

We remark that since the theorem \ref{Coro2} does not require non-negative bisectional curvature, it is in some cases more general than the result in \cite{special}. In particular, it may be applied to the Calabi-Yau $3$-folds that are of interest to physicists. We give an example in section \ref{app}.\\
\indent Here is a more detailed outline of the paper. In section \ref{setup} we set up the method of continuity, prove uniqueness and a uniform estimate, indicate a proof of the theorem in \cite{Weisun}, and also prove that upper bounds on $\omega _{\phi}$ lead to uniform ellipticity. Owing to the non-symmetric nature of the equation, this is actually somewhat nontrivial. In sections \ref{gradi} and \ref{high} we prove further \emph{a priori} estimates in the special case of the equation in theorem \ref{mainthm}. In section \ref{app} we prove theorems \ref{Coro1} and \ref{Coro2}. \\

\section{Setup of the method of continuity and the uniform estimate}\label{setup}
\indent Before proceeding further, we define a notion of positivity of $(p,p)$-forms.
\begin{defi}
Let $(X,\chi)$ be a hermitian manifold. A smooth $(p,p)$-form $\alpha_p$ is (strictly) positive if $\alpha _p = f \chi ^p + (\sqrt{-1})^p (-1)^{p(p-1)/2}\displaystyle \sum _{k=1} ^N f_k \Phi _{k} \wedge \overline{\Phi}_{k}$ where $f$ is a (strictly) positive smooth function, $f_k \geq 0$ are positive \footnote{Unless specified otherwise, we use positive in the french sense to mean non-negative} smooth functions, and $\Phi_{k}$ are smooth $(p,0)$-forms. Moreover, we write $\alpha \geq 0$ if $\alpha$ is positive and $\alpha > 0$ if it is strictly so.
\label{positivity}
\end{defi}
\begin{rema}\label{generalpositive}
Perhaps a more natural definition would be to require that $\alpha_p$ define a hermitian non-negative bilinear form on $\Lambda ^p T^{(1,0)} X$. However, one can easily see that this is equivalent to our definition. In particular, the wedge product of strictly positive forms is strictly positive.
\end{rema}
\indent In order to solve \ref{maineq} we employ the method of continuity. In whatever follows we assume that on $(X,\omega)$ for at least one value of $k_0$, $\alpha _{k_0} > \delta \omega ^{k_0}$ for a positive constant $\delta$.  Consider the following family of equations parametrised by $t\in[0,1]$.
\begin{gather}
\omega_{\phi _t} ^n = t\sum_{k=1}^{n-1} \alpha_k \wedge \omega _{\phi _t}^{n-k} + \alpha_n b_t c^{1-t},
\label{conti}
\end{gather}
where $\omega_{\phi _t} = \omega + \sqrt{-1} \pbp \phi _t$ is a K\"ahler form, $\alpha _k$ are $d$-closed positive $(k,k)$-forms and $\alpha _n$ is a strictly positive $(n,n)$-form, $c =\displaystyle \frac{\int \omega ^n}{\int \alpha _n}$, and $b_t$ is a normalising constant chosen so that the integrals are equal on both sides, i.e., $b_t =  c^{t-1}\frac{\displaystyle \int \left(\omega ^n - t  \displaystyle\sum _{k=1} ^{n-1}\alpha _k \wedge \omega ^{n-k}\right)}{\displaystyle\int \alpha _n}= c^{t-1}\frac{\displaystyle \int \left((1-t)\omega ^n + t  \displaystyle \alpha _n\right)}{\displaystyle\int \alpha _n}$. Thus $b_t c^{1-t} \geq 1$. \\
\indent Let $\mathcal{T}$ be the set of $t\in [0,1]$ where equation \ref{conti} has a unique smooth solution $\phi _t$ such that $\displaystyle \int \phi _t \omega ^n =0$, $\omega _{\phi _t} > 0$, and $n \omega _{\phi _t} ^{n-1} - t\sum (n-k) \alpha _k \wedge \omega _{\phi _t} ^{n-k-1} > 0$. $\mathcal{T}$ is non-empty because at $t=0$ the equation is the usual Monge-Amp\`ere equation which has a solution thanks to \cite{Yau}. As usual, we need to prove that $\mathcal{T}$ is both, open and closed. \\

\textbf{Openness} : Let $\mathcal{C}$ be the set of $C^{2,\beta}$ zero-average functions $\phi$ such that $\omega _{\phi}>0$ where the background metric used to define the Banach spaces and the average is $\omega$. We proceed to define a smooth map $T$ from $\mathcal{B}$ (where $\mathcal{B}$ is an open subset of $\mathcal{C} \times [0,1]$ such that $n \omega _{\phi} ^{n-1} - t\sum (n-k) \alpha _k \wedge \omega _{\phi} ^{n-k-1} > 0$) to $C^{0,\beta}$ top forms $\gamma >0$ such that $\int \gamma = 1$. It is given by $T(\phi, t) = \displaystyle \frac{\omega _{\phi} ^n - t\displaystyle \sum _{k=1} ^{n-1} \alpha _k \omega _{\phi} ^{n-k}}{\int \omega ^n - t\displaystyle\sum _{k=1} ^{n-1} \int \alpha _k \omega ^{n-k}}$. The derivative $DT$ at the point $(\phi_a, a)$ evaluated on the vector $(u,0)$ is computed to be $DT_{\phi_a, a}(u,0) = \frac{(n \omega _{\phi _a} ^{n-1} - a\sum (n-k) \alpha _k \wedge \omega _{\phi _a} ^{n-k-1}) \wedge \ddc u}{\int \omega ^n - a\sum \int \alpha _k \omega ^{n-k}}$. It is  easily seen to be a self-adjoint elliptic operator. By the Fredholm alternative, we can solve the PDE if the right hand side is orthogonal to its kernel. Its kernel (by the maximum principle) consists of constants. Thus by the implicit function theorem on Banach manifolds, on the level set $T^{-1}\left(\frac{\alpha _n}{\int \alpha _n}\right)$ we can locally solve for $\phi$ as a smooth function of $t$.  \\

\textbf{Closedness} : If $t_j \rightarrow t$, we need to prove that a subsequence $\phi _{j} \rightarrow \phi$ in $C^{2,\gamma}$, $\omega _{\phi} > 0$, and $n \omega _{\phi} ^{n-1} - t\sum (n-k) \alpha _k \wedge \omega _{\phi} ^{n-k-1} >0$. By the usual bootstrap argument this implies that $\phi$ is smooth. The Arzela-Ascoli theorem shows that it is enough to prove \emph{a priori} $C^{2, \gamma}$ estimates in order to show convergence of $\phi _j \rightarrow \phi$. The following argument shows that the limiting $\phi$ satisfies the other conditions.
\begin{lemma}
If $\Vert \phi _j \Vert _{C^2} \leq C$, then $\omega _{\phi _j} \geq  R \omega > 0$ and $n \omega _{\phi _j} ^{n-1} - t_j\sum (n-k) \alpha _k \wedge \omega _{\phi _j} ^{n-k-1} \geq R\omega ^{n-1} >0$ where the positive constant $R$ depends on $C$.
\label{uniformellip}
\end{lemma}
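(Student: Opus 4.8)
The plan is to argue pointwise, diagonalising $\omega_{\phi_j}$ with respect to $\omega$, and to extract \emph{both} lower bounds from the equation \ref{conti} itself rather than from the cone condition on the background $\omega$ (a quick scaling check shows the latter does not by itself survive the passage from $\omega$ to $\omega_{\phi_j}$, since the dangerous regime is where $\omega_{\phi_j}$ is small). First I would record the trivial upper bound: the hypothesis $\Vert\phi_j\Vert_{C^2}\leq C$ controls $\sqrt{-1}\,\pbp\phi_j$ in sup-norm, so the eigenvalues $\lambda_1,\dots,\lambda_n$ of $\omega_{\phi_j}$ relative to $\omega$ satisfy $\lambda_m\leq\Lambda$ for a constant $\Lambda=\Lambda(C)$. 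For the first assertion, $\omega_{\phi_j}\geq R\omega$, I would use that the right-hand side of \ref{conti} dominates $\alpha_n b_{t_j}c^{1-t_j}\geq\alpha_n\geq\ve'\omega^n$ (strict positivity of $\alpha_n$ together with compactness of $X$), while the terms $t_j\alpha_k\wedge\omega_{\phi_j}^{n-k}$ are non-negative; hence $\omega_{\phi_j}^n\geq\ve'\omega^n$, i.e. $\prod_m\lambda_m\geq\ve'$. Combined with $\lambda_m\leq\Lambda$ this forces every $\lambda_m\geq\ve'/\Lambda^{n-1}$.

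The substantive point is the uniform ellipticity, i.e. the lower bound on $Q:=n\omega_{\phi_j}^{n-1}-t_j\sum_k(n-k)\alpha_k\wedge\omega_{\phi_j}^{n-k-1}$, and this is exactly where the non-symmetric structure of the equation makes the naive eigenvalue bookkeeping fail. Since in bidegree $(n-1,n-1)$ the weak and strong notions of positivity coincide, the form estimate $Q\geq R\omega^{n-1}$ is equivalent to the pointwise scalar bound $Q\wedge\theta_v\geq R'\,\omega^n$ for every rank-one positive $(1,1)$-form $\theta_v=\sqrt{-1}\,\nu\wedge\bar\nu$ attached to a $\omega$-unit vector $v$. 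The device I would use is that $F(\gamma):=\gamma^n-t_j\sum_{k=1}^{n-1}\alpha_k\wedge\gamma^{n-k}$ is \emph{affine} along the ray $\gamma=\omega_{\phi_j}+s\theta_v$: because $\theta_v\wedge\theta_v=0$ one gets exactly $F(\omega_{\phi_j}+s\theta_v)=F(\omega_{\phi_j})+s\,Q\wedge\theta_v$. I would then choose $s=-c_v<0$ to be the unique value at which $\gamma_\ast:=\omega_{\phi_j}-c_v\theta_v$ becomes positive semidefinite and degenerate; elementary linear algebra gives $c_v=1/(\omega_{\phi_j}^{-1})_{v\bar v}\leq\Lambda$.

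Being of rank $n-1$, $\gamma_\ast$ satisfies $\gamma_\ast^n=0$, so $F(\gamma_\ast)=-t_j\sum_k\alpha_k\wedge\gamma_\ast^{n-k}\leq0$ (a wedge of positive forms is positive), whereas $F(\omega_{\phi_j})=\alpha_n b_{t_j}c^{1-t_j}\geq\ve'\omega^n$. Plugging these into the affine identity at $s=-c_v$ yields $c_v\,Q\wedge\theta_v=F(\omega_{\phi_j})-F(\gamma_\ast)\geq\ve'\omega^n$, hence $Q\wedge\theta_v\geq(\ve'/\Lambda)\,\omega^n$ uniformly in $v$ and in $j$, which is the desired $Q\geq R\omega^{n-1}$ with $R=R(C)$.

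I expect the rank-one affine trick to be the crux: the main things to get right are the reduction of the form inequality $Q\geq R\omega^{n-1}$ to the scalar inequalities $Q\wedge\theta_v\geq R'\,\omega^n$ (using the coincidence of the positivity notions in middle dimension, cf. remark \ref{generalpositive}) and the bound $c_v\leq\Lambda$ on the degeneracy parameter purely in terms of the upper eigenvalue bound. Everything else—existence and positivity of $c_v$, and $\gamma_\ast^n=0$—is routine, and it is worth noting that the cone condition on the background $\omega$ plays no role in this step; only the strict positivity of the datum $\alpha_n$ is used.
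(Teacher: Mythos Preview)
Your argument is correct and is a genuinely different---and rather slicker---proof than the one in the paper. For the lower bound on $\omega_{\phi_j}$ the paper does not use $\alpha_n>0$; instead it invokes the standing hypothesis $\alpha_{k_0}\geq\delta\,\omega^{k_0}$ for some $k_0$ and reads off from \ref{conti} the inequality $1\geq\delta\,\omega^{k_0}\wedge\omega_{\phi_j}^{n-k_0}/\omega_{\phi_j}^n$, which in eigenvalue form forces each $\lambda_m$ to stay bounded below once the $\lambda_m$ are bounded above. For the ellipticity of $Q$ the paper works in coordinates where $\omega_{\phi_j}$ is the identity and $v$ points along $\partial/\partial z^1$, expands $Q_{2\bar2\ldots n\bar n}$ and the equation \ref{conti} as sums over multi-indices, and then subtracts: the identity $n!-t_j\sum_{k}\sum_{1\notin(I,I^0)}(n-k)!(\alpha_k)_{I^0}=t_j\sum_{k}\sum_{1\notin W}(n-k)!(\alpha_k)_{W^0}$ isolates a positive remainder which is bounded below via $\alpha_{k_0}\geq\delta\,\omega^{k_0}$.

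Your rank-one affine trick replaces this index bookkeeping by a single clean observation: $F(\gamma)=\gamma^n-t_j\sum_k\alpha_k\wedge\gamma^{n-k}$ is affine along any ray $\omega_{\phi_j}+s\theta_v$, so evaluating at the degeneracy parameter $s=-c_v$ immediately gives $c_v\,Q\wedge\theta_v=F(\omega_{\phi_j})-F(\gamma_\ast)\geq\alpha_n b_{t_j}c^{1-t_j}$. What you gain is conceptual transparency and a proof that transports verbatim to any equation of the form $\omega_\phi^n=\sum_k\alpha_k\wedge\omega_\phi^{n-k}+(\text{strictly positive }(n,n)\text{-form})$, with no need to track combinatorics. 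What the paper's route buys is a version that uses only $\alpha_{k_0}>0$ for an \emph{arbitrary} $k_0$; your argument as written leans specifically on the strict positivity of the top-degree datum $\alpha_n$ (legitimate here, since it is built into the continuity path \ref{conti}, and indeed needed anyway for uniformity as $t_j\to0$).
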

\begin{proof}
Recall that by assumption $\alpha _{k_0} \geq \delta \omega ^{k_0}$ for some $k_0$ and some constant $\delta >0$. Equation \ref{conti} implies that
$$1\geq \delta \frac{\omega ^{k_0} \wedge \omega _{\phi _j} ^{n-k_0}}{\omega _{\phi _j} ^n} \geq \delta \frac{k_0 ! (n-k_0)!}{n!} \sum \frac{1}{\lambda _{i_1}\ldots\lambda_{i_{k_0}}}.$$
where $\lambda_{i}$ are the eigenvalues of $\omega_{\phi_j}$ with respect to $\omega$. Hence we see that $\omega _{\phi _j} \geq R \omega$ for some $R >0$ depending on the upper bound on $\omega _{\phi _j}$. Indeed, if the smallest eigenvalue $\lambda_{1}$ (with respect to $\omega$) of $\omega_{\phi_j}$ becomes arbitrarily small then the right hand side of the equation above becomes arbitrarily large because $\omega_{\phi_j}$ is bounded above by assumption. \\
\indent Likewise, at an arbitrary point $p$, let $v$ be a unit $(1,0)$-form with respect to $\omega$. Choose coordinates so that $\omega _{\phi_j}$ is diagonal with eigenvalues $1$ and $v=c\frac{\partial}{\partial z^1}$ at $p$. Note that $c$ is bounded below and above because $\omega _{\phi_j}$ is. Then
\begin{gather}
\displaystyle \left( n\omega _{\phi _j} ^{n-1} -  t_j\sum _k (n-k) \alpha _k \wedge \omega _{\phi _j} ^{n-k-1}\right ) \wedge v \wedge \bar{v} = c ^2 \left( n\omega _{\phi _j} ^{n-1} -  t_j\sum _k (n-k) \alpha _k \wedge \omega _{\phi _j} ^{n-k-1}\right ) _{2\bar{2}3 \bar{3} \ldots n \bar{n}} \nonumber \\
= c^2 n!  - c^2t_j \sum _k \sum _{\vert I \vert = n-k-1 , 1 \notin (I,I^{0})} (n-k)! \left(\alpha _k \right ) _{I^{0}} ,\nonumber
\end{gather}
where if the multi-index $I=(i_1,\ldots, i_{n-k-1})$ then $I^{0}$ is the multi-index consisting of $k$ other numbers in $1,2,\ldots,n$. Equation \ref{conti} implies that
\begin{gather}
n!  = t_j \sum _k \sum _{\vert V \vert = n-k} (n-k)! \left(\alpha _k \right ) _{V^{0}}  \nonumber \\
\Rightarrow  n!  - t_j \sum _k \sum _{\vert I \vert = n-k-1 , 1 \notin (I,I^{0})} (n-k)! \left(\alpha _k \right ) _{I^{0}}  = t_j \sum _k \sum _{\vert W \vert = n-k, 1 \notin W} (n-k)! \left(\alpha _k \right ) _{W^{0}} \nonumber \\
\geq t_j \delta (n-k_0)! \sum _{\vert W \vert = n-k_0, 1 \notin W} \left(\omega ^{k_0} \right ) _{W^{0}}\geq \tilde{R}. \nonumber
\end{gather}
The last equation implies that $\displaystyle \left( n\omega _{\phi _j} ^{n-1} -  t_j\sum _k (n-k) \alpha _k \wedge \omega _{\phi _j} ^{n-k-1}\right ) \wedge v \wedge \bar{v} \geq R$ for some $R>0$ depending on the upper bound on $\omega _{\phi _j}$.
\end{proof}

\begin{rema}\label{Weithmpf}
At this juncture, if in equation \ref{maineq} we substitute $\alpha _k = \psi _{\epsilon} c_{k} \chi ^k \ \forall \ 1 \leq k \leq n-1$, $\alpha _n = (c_n + \epsilon)\chi ^n $ (where $\chi$ is a K\"ahler metric), such that $c_k \geq 0 \ \forall \ 1\leq k \leq n$, $\epsilon >0$ and $\psi _{\epsilon} = \frac{\displaystyle \int \omega ^n - \int (\epsilon + c_n) \chi^n}{\displaystyle \int \omega ^n - \int c_n \chi^n}$ are constants such that $\displaystyle \sum_{k=1} ^{n} c_k > 0$ and $\displaystyle \int \omega ^n = \sum _{k=1} ^{n} \int c_k \chi ^k \wedge \omega ^{n-k}$, then the $C^{2,\gamma}$ \emph{a priori} estimates in \cite{Weisun,Coll} guarantee that the resulting equation has a smooth solution $\phi _{\epsilon}$. We note that the \emph{a priori}  estimates up to the second order in \cite{Weisun,Coll} do not depend on $\epsilon$. Actually, using the Evans-Krylov theorem and the fact that (by lemma \ref{uniformellip}) the equation is uniformly elliptic we have $C^{2,\gamma}$ \emph{a priori} estimates independent of $\epsilon$. Therefore, upto a subsequence $\phi _{\epsilon} \rightarrow \phi$ in $C^{2,\beta}$ as $\epsilon \rightarrow 0$. Hence we recover the main theorem in \cite{Weisun} in the K\"ahler case via a continuity path that passes only through K\"ahler metrics and more importantly, openness is easy to prove (as opposed to \cite{Weisun}).
\end{rema}
As mentioned earlier, lemma \ref{uniformellip} shows that all we have to do in order to solve equation \ref{maineq} is to prove \emph{a priori} $C^{2, \gamma}$ estimates on $\phi$. We prove a general $C^0$ estimate on $\phi$ here. \\

\textbf{The uniform estimate} : Before proceeding further, we prove a lemma about concavity of certain potentially non-symmetric functions of the K\"ahler form.
\begin{lemma}\label{concavitylem}
The function $\omega  \rightarrow \frac{\alpha _k \wedge \omega ^{n-k}}{\omega ^n}$ is a convex function of K\"ahler forms if $\alpha _k \geq 0$.
\end{lemma}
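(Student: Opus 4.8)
The plan is to reduce the statement to a one-dimensional convexity computation. First, by Remark \ref{generalpositive} it suffices to treat $\alpha _k = (\sqrt{-1})^k(-1)^{k(k-1)/2}\Phi\wedge\bar{\Phi}$ for a single smooth $(k,0)$-form $\Phi$, since a general positive $(k,k)$-form defines a non-negative Hermitian form on $\Lambda^k$ and is therefore a finite sum of such rank-one pieces, while a finite sum of convex functions is convex. Working at a fixed point and letting $H$ denote the Hermitian matrix of $\omega$ in a local unitary coframe, a direct linear-algebra computation (only the ``diagonal'' monomials $dz^I\wedge d\bar{z}^I$ survive after wedging with $\omega^{n-k}$ and dividing by $\omega^n$) gives $\frac{\alpha _k\wedge\omega^{n-k}}{\omega^n}=c_{n,k}\,\langle(\Lambda^k H)^{-1}\Phi,\Phi\rangle$, where $c_{n,k}>0$ is a universal constant and $\Lambda^k H$ is the $k$-th multiplicative compound of $H$, i.e. the induced metric on $\Lambda^k$, whose eigenvalues are the products $\prod_{i\in I}\lambda_i$ of the eigenvalues $\lambda_i$ of $H$. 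Thus it is enough to show that $H\mapsto\langle(\Lambda^k H)^{-1}\Phi,\Phi\rangle$ is convex on the cone of positive Hermitian matrices for each fixed $\Phi$.

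Convexity is tested along an arbitrary segment $H_t=A+tV$ with $A>0$ and $V$ Hermitian, and it is invariant under the affine substitutions convexity permits; I would exploit this twice. Writing $A=SS^*$ and replacing $H$ by $S\tilde{H}S^*$, a linear change of variable carrying $A$ to the identity, the quantity retains exactly the shape $c_{n,k}\langle(\Lambda^k\tilde{H})^{-1}\tilde{\Phi},\tilde{\Phi}\rangle$ with $\tilde{\Phi}=(\Lambda^k S)^{-1}\Phi$, because $\Lambda^k$ is multiplicative. Hence it suffices to verify convexity at the base point $\tilde{H}=I$. There, conjugating by a unitary $U$ that diagonalises the direction $V$ (again a form-preserving linear change of variables, altering only $\Phi$) we may assume $V=\mathrm{diag}(v_1,\dots,v_n)$ with $v_i\in\RR$.

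With these normalisations $H_t=\mathrm{diag}(1+tv_1,\dots,1+tv_n)$, so $\Lambda^k H_t$ is diagonal in the basis $\{e_I\}_{|I|=k}$ with entries $\prod_{i\in I}(1+tv_i)$, and therefore
\[
\psi(t):=\langle(\Lambda^k H_t)^{-1}\Phi,\Phi\rangle=\sum_{|I|=k}|\Phi_I|^2\prod_{i\in I}\frac{1}{1+tv_i}=\sum_{|I|=k}|\Phi_I|^2\exp\Big(-\sum_{i\in I}\log(1+tv_i)\Big).
\]
Since $H_t$ stays positive definite along the segment, each factor satisfies $1+tv_i>0$, each $-\log(1+tv_i)$ is a convex function of $t$, a finite sum of convex functions is convex, and $\exp$ is convex and increasing; hence every summand is convex and so is $\psi$. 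As $|\Phi_I|^2\geq 0$, this proves the claim.

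The only genuine difficulty is that $\Lambda^k$ is a nonlinear (degree $k$) function of $H$, so one cannot argue by treating the eigenvalues of $\omega$ as independent variables, nor is $\Lambda^k$ operator concave (already $\Lambda^2 H=\det H$ fails to be concave when $n=2$). The device that resolves this is the reduction to base point $I$ together with the diagonalisation of the perturbation direction $V$, which makes $\Lambda^k H_t$ simultaneously diagonal and collapses everything to the elementary convexity of $t\mapsto\prod_{i\in I}(1+tv_i)^{-1}$.
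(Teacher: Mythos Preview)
Your argument is correct and follows essentially the same route as the paper's: both proofs reduce to rank-one pieces $\Phi\wedge\bar{\Phi}$, test convexity along a segment of K\"ahler forms, simultaneously diagonalise the two endpoints (your reduction to $A=I$ followed by unitary diagonalisation of $V$ is exactly the paper's choice of coordinates making $\omega_1$ Euclidean and $\omega_2$ diagonal), and finish with the elementary convexity of $t\mapsto\prod_{i\in I}(1+tv_i)^{-1}$, which the paper phrases as convexity of $1/\det$. Your packaging via the compound matrix $\Lambda^k H$ and the identity $\frac{\alpha_k\wedge\omega^{n-k}}{\omega^n}=c_{n,k}\langle(\Lambda^k H)^{-1}\Phi,\Phi\rangle$ is a clean coordinate-free way to say the same thing; the one minor quibble is that your parenthetical ``only the diagonal monomials survive'' is literally true only after $H$ has been diagonalised, but since both sides of your identity are invariant under unitary changes of coframe this does not affect the argument.
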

\begin{proof}
Recall that $\alpha _k = f \chi ^k + \displaystyle \sqrt{-1}^{n-1} (-1)^{n(n-1)/2} \sum _{i=1}^N f_i \Phi _i \wedge \bar{\Phi} _i$ where $f, f_i \geq 0$. Choosing normal coordinates for $\chi$ we see that $\omega \rightarrow f \frac{\chi^ k \wedge \omega ^{n-k}}{\omega ^n}$ is convex by standard theory. Furthermore, let $\omega _1$ and $\omega _2$ be two K\"ahler forms. At the point under consideration choose coordinates so that $\omega _1$ is Euclidean and $\omega _2$ is diagonal with eigenvalues $\lambda _j$.  Therefore for some positive constant $C$ we have,
\begin{gather}
\frac{\Phi _i \wedge \bar{\Phi}_i \wedge (t \omega _1 + (1-t) \omega _2)^{n-k}}{(t\omega _1 + (1-t) \omega _2)^n} = C\displaystyle \sum _{\vert I \vert = n-k} \left (\Phi _i \wedge \bar{\Phi}_i \right )_{I^{0}} \frac{(t+(1-t)\lambda)_{I}}{(t+(1-t)\lambda _1)\ldots (t+(1-t)\lambda _n)}. \label{subm}
\end{gather}
It is now easy to deduce the desired result from expression \ref{subm} and the fact that $\frac{1}{\det(A)}$ is convex as a function of positive-definite matrices $A$.
\end{proof}
\begin{prop}\label{uniformestimate}
A smooth solution of equation \ref{maineq} satisfies $\Vert \phi \Vert _{C^0 (X)} \leq C$ where $C$ depends only on  $\omega$, bounds on the coefficients of the equation, and the positive lower bound on $\alpha _{k_0}$.
\end{prop}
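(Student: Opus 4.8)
The plan is to establish the $C^0$ estimate via Yau's Moser iteration argument, as advertised in the introduction. The key structural input is the convexity statement of Lemma \ref{concavitylem}, which lets us treat the nonsymmetric equation \ref{maineq} like a standard complex Monge-Amp\`ere equation for the purposes of comparing $\omega_\phi$ against the background $\omega$.

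First I would normalize $\phi$ so that $\sup_X \phi = 0$ (or $\int \phi\,\omega^n = 0$ and then control the two extremes separately), and set $u = \phi - \inf_X \phi \geq 0$. Rewriting equation \ref{maineq} as $\frac{\omega_\phi^n}{\omega^n} = \sum_{k=1}^n \frac{\alpha_k \wedge \omega_\phi^{n-k}}{\omega^n}$, I would use the positive lower bound $\alpha_{k_0} \geq \delta \omega^{k_0}$ together with Lemma \ref{concavitylem} to derive a pointwise inequality of the form $\frac{\omega_\phi^n}{\omega^n} \geq c_0 > 0$ on the set where $\omega_\phi$ is comparable to $\omega$; more precisely, the convexity of $\omega \mapsto \frac{\alpha_k \wedge \omega^{n-k}}{\omega^n}$ gives a tangent-line lower bound that I can integrate. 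The goal of this step is to extract a usable lower bound on the Monge-Amp\`ere measure $\omega_\phi^n$ in terms of a fixed volume form.

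Next I would run the standard Moser iteration. The starting point is the integration-by-parts identity
\begin{gather}
\int_X u^{p} (\omega^n - \omega_\phi^n) = \int_X u^p \sqrt{-1}\,\pbp(-\phi) \wedge \sum_{j=0}^{n-1} \omega^j \wedge \omega_\phi^{n-1-j}, \nonumber
\end{gather}
which, after integrating by parts, produces a term of the form $\int_X p\, u^{p-1} \sqrt{-1}\,\partial u \wedge \bar{\partial} u \wedge (\text{positive } (n-1,n-1)\text{-form})$. Combining the lower bound on $\omega_\phi^n$ from the previous step with this identity yields a Sobolev-type inequality controlling $\|u^{(p+1)/2}\|_{L^{2n/(n-1)}}$ by $\|u^{(p+1)/2}\|_{L^2}$, with constants depending only on $\omega$ and the coefficient bounds. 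Iterating over $p \to \infty$ and using the Sobolev inequality on the compact K\"ahler manifold $(X,\omega)$ bounds $\sup_X u$ by $\|u\|_{L^1}$, and a De Giorgi-type argument (or the standard $L^1$-to-$L^\infty$ mechanism) closes the estimate. The $L^1$ bound on $u = \phi - \inf\phi$ follows from the normalization together with the fact that the class $[\omega_\phi] = [\omega]$ is fixed.

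The main obstacle I anticipate is the non-symmetric, mixed-degree nature of the right-hand side: unlike the pure Monge-Amp\`ere case $\omega_\phi^n = F\omega^n$, here the source $\sum_k \alpha_k \wedge \omega_\phi^{n-k}$ itself depends on $\omega_\phi$ through the various intermediate powers, so the Moser iteration cannot simply quote a fixed $L^p$ bound on $F$. The lower bound $\alpha_{k_0} \geq \delta\omega^{k_0}$ is precisely what rescues the argument, since it guarantees that at least one term degenerates in a controlled direction; the convexity lemma then handles the sign and monotonicity issues uniformly across the remaining terms. Making the constants in the iteration genuinely independent of $\phi$ (depending only on $\omega$, the coefficient bounds, and $\delta$) is the delicate bookkeeping step, but it is the heart of why the $C^0$ estimate holds in full generality.
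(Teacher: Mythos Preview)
Your plan has a genuine gap at the integration-by-parts step. When you telescope $\omega^n-\omega_\phi^n$ and integrate by parts, you obtain
\[
p\int_X u^{p-1}\sqrt{-1}\,\partial u\wedge\bar\partial u\wedge\sum_{j=0}^{n-1}\omega^j\wedge\omega_\phi^{n-1-j}=\int_X u^p\,\omega_\phi^n-\int_X u^p\,\omega^n,
\]
so to bound the gradient term you need an \emph{upper} bound on $\omega_\phi^n$ against a fixed volume form (not the lower bound you propose). In Yau's setting this is $\omega_\phi^n=F\omega^n$ with $F\in L^\infty$; here $\omega_\phi^n=\sum_k\alpha_k\wedge\omega_\phi^{n-k}$ depends on $\omega_\phi$ through the intermediate powers and no such a priori upper bound is available. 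Neither $\alpha_{k_0}\geq\delta\omega^{k_0}$ nor Lemma~\ref{concavitylem} produces one, so the iteration does not close with this choice of subtraction.

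The paper's fix is to subtract the \emph{full} operator rather than just $\omega^n$. Setting $\Theta=\omega^n-\sum_k\alpha_k\wedge\omega^{n-k}$ (a fixed, bounded form) and using the equation $\omega_\phi^n-\sum_k\alpha_k\wedge\omega_\phi^{n-k}=0$, one writes the difference as $\int_0^1\frac{d}{dt}(\cdot)\,dt$ along $\omega_{t\phi}=t\omega_\phi+(1-t)\omega$; after integration by parts the $(n-1,n-1)$-form that appears is precisely the linearized cone form $n\omega_{t\phi}^{n-1}-\sum_k(n-k)\alpha_k\wedge\omega_{t\phi}^{n-k-1}$. Restricting to the hyperplane orthogonal to $\partial\phi_-$, concavity of $\det^{1/(n-1)}$ together with Lemma~\ref{concavitylem} bounds this below by a convex combination of the cone expressions at $t=0$ (assumed positive) and $t=1$ (positive for an admissible solution); retaining only the $(1-t)$-contribution and integrating in $t$ yields $\|\nabla\phi_-^{(p+1)/2}\|_{L^2}^2\leq C(p+1)\|\phi_-\|_{L^p}^p$, after which Moser iteration is standard. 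The missing idea in your outline is to linearize the whole equation, so that the cone condition---rather than a nonexistent volume bound---controls the iteration.
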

\begin{proof}
 We follow Yau's by-now-classical \cite{Yau} technique adapted from \cite{GenMA}. In whatever follows, unless otherwise specified, all controlled constants are denoted by $C$. Without loss of generality we may change the normalisation of $\phi$ so that $\sup \phi = -1$. Let $\phi = - \phi _{-}$. We will find an upper bound on $\phi _{-}$ using Moser iteration (as usual). Let $\Theta = \displaystyle \omega ^n -  \sum _{k=1} ^n \alpha _k \wedge \omega ^{n-k}$. Subtracting $\Theta$ on both sides from $\omega _{\phi}^n - \displaystyle \sum _{k=1} ^n \alpha _k \wedge \omega _{\phi} ^{n-k}=0$, multiplying by $\phi _{-} ^p$ and integrating we see that
\begin{gather}
\displaystyle \int _X \phi _{-} ^ p\int _{0}^{1} \frac{d}{dt} \left ( \omega _{t\phi}^n - \displaystyle \sum _{k=1} ^n \alpha _k \wedge \omega _{t\phi} ^{n-k} \right ) dt \leq C \Vert \phi _{-} \Vert _{L^p} ^p \nonumber \\
\Rightarrow -\displaystyle \int _{0}^{1} \int _X \phi _{-} ^ p \left ( n\omega _{t\phi}^{n-1} - \displaystyle \sum _{k=1} ^n (n-k) \alpha _k \wedge \omega _{t\phi} ^{n-k-1} \right ) \sqrt{-1}\pbp \phi_{-} dt \leq C \Vert \phi _{-} \Vert _{L^p} ^p \nonumber \\
\Rightarrow \displaystyle \int _{0}^{1} \int _X \sqrt{-1} \partial \phi _{-} ^ {\frac{p+1}{2}} \wedge \bar{\partial} \phi _{-} ^{\frac{p+1}{2}} \wedge \left ( n\omega _{t\phi}^{n-1} - \displaystyle \sum _{k=1} ^n (n-k) \alpha _k \wedge \omega _{t\phi} ^{n-k-1} \right ) dt \nonumber \\
\leq  \frac{C(p+1)^2}{p} \Vert \phi _{-} \Vert _{L^p} ^p \leq C(p+1) \Vert \phi _{-} \Vert _{L^p} ^p. \nonumber
\end{gather}
At a point $q$ we choose coordinates normal coordinates $z^i$ for $\omega$ so that $\partial \phi _{-}$ is proportional to $\frac{\partial} {\partial z^1}$. This means that at $q$
\begin{gather}
\partial \phi _{-} ^ {\frac{p+1}{2}} \wedge \bar{\partial} \phi _{-} ^{\frac{p+1}{2}} \wedge\left ( n\omega _{t\phi}^{n-1} - \displaystyle \sum _{k=1} ^n (n-k) \alpha _k \wedge \omega _{t\phi} ^{n-k-1} \right ) \nonumber \\
 = \partial \phi _{-} ^ {\frac{p+1}{2}}  \bar{\partial} \phi _{-} ^{\frac{p+1}{2}}  \left ( n\omega _{t\phi}^{n-1} - \displaystyle \sum _{k=1} ^n (n-k) \alpha _k \wedge \omega _{t\phi} ^{n-k-1} \right )_{2\bar{2}\ldots n\bar{n}} \nonumber \\
= \partial \phi _{-} ^ {\frac{p+1}{2}}  \bar{\partial} \phi _{-} ^{\frac{p+1}{2}}  \left(n\omega _{t\phi}^{n-1} \right)_{2\bar{2}\ldots n\bar{n}}\left ( 1 - \frac{(\displaystyle \sum _{k=1} ^n (n-k) \alpha _k \wedge \omega _{t\phi} ^{n-k-1})_{2\bar{2}\ldots n\bar{n}}}{(n\omega _{t\phi}^{n-1} )_{2\bar{2}\ldots n\bar{n}}} \right ). \nonumber
\end{gather}
Now we restrict ourselves to the subspace spanned by $\partial _2 , \ldots, \partial _n$. To emphasize this we denote the restriction of any form $\beta$ by $\tilde{\beta}$. Now we proceed as in \cite{Weisun}. Note that $\omega _{t\phi} = t \omega _{\phi} + (1-t) \omega$ and that $\det(A)^{1/n}$ is concave as a function of positive-definite $n\times n$ matrices $A$. Therefore $\tilde{\omega}_{t\phi} ^{n-1}\geq t^{n-1} \tilde{\omega}^{n-1} _{\phi}+ (1-t)^{n-1} \tilde{\omega} ^{n-1}$. Likewise, lemma \ref{concavitylem} shows that
\begin{gather}
1 - \frac{(\displaystyle \sum _{k=1} ^n (n-k) \alpha _k \wedge \omega _{t\phi} ^{n-k-1})_{2\bar{2}\ldots n\bar{n}}}{(n\omega _{t\phi}^{n-1} )_{2\bar{2}\ldots n\bar{n}}} \geq t \left ( 1 - \frac{\displaystyle \sum _{k=1} ^n (n-k) \tilde{\alpha} _k \wedge \tilde{\omega} _{\phi} ^{n-k-1}}{n\tilde{\omega} _{\phi}^{n-1}} \right ) \nonumber \\ + (1-t) \left ( 1 - \frac{\displaystyle \sum _{k=1} ^n (n-k) \tilde{\alpha} _k \wedge \tilde{\omega} ^{n-k-1}}{n\tilde{\omega} ^{n-1}} \right ) \nonumber
\end{gather}
Therefore,
\begin{gather}
\displaystyle  \frac{1}{n+1}\int _X \sqrt{-1} \partial \phi _{-} ^ {\frac{p+1}{2}} \wedge \bar{\partial} \phi _{-} ^{\frac{p+1}{2}} \wedge \left ( n\omega ^{n-1} - \displaystyle \sum _{k=1} ^n (n-k) \alpha _k \wedge \omega  ^{n-k-1} \right )  \leq  C(p+1) \Vert \phi _{-} \Vert _{L^p} ^p \nonumber \\
\Rightarrow  \Vert \nabla (\phi _{-} ^{(p+1)/2}) \Vert _{L^2} ^2 \leq C(p+1) \Vert \phi _{-} \Vert _{L^p} ^p. \nonumber
\end{gather}
From the this point onwards, the proof is standard. (See \cite{GenMA} for instance.)
\end{proof}

\textbf{Uniqueness} : If $\phi _1$ and $\phi _2$ are two smooth solutions of equation \ref{maineq} such that $\displaystyle \int \phi _1 \omega ^n = \int \phi _2 \omega ^n$, $\omega _{\phi _i} >0$ and $n\omega _{\phi} ^{n-1} - \displaystyle \sum _{k=1} ^{n-1} (n-k) \alpha _k \wedge \omega _{\phi _i}^{n-k} >0$, then upon subtraction we get
\begin{gather}
\displaystyle  \int _{0} ^{1} \frac{d}{dt} \left ( \omega _{t\phi_1 + (1-t)\phi _2} ^n - \sum _{k=1} ^n \alpha _k \wedge \omega _{t\phi_1 + (1-t)\phi _2} ^{n-k} \right ) dt = 0 \nonumber \\
\Rightarrow \displaystyle \left ( \int _{0} ^{1} \left ( n\omega _{t\phi_1 + (1-t)\phi _2} ^{n-1} - \sum _{k=1} ^{n-1} (n-k)\alpha _k \wedge \omega _{t\phi_1 + (1-t)\phi _2} ^{n-k-1} \right ) dt \right )\wedge \sqrt{-1} \pbp (\phi_1 - \phi _2) = 0 \label{uni}
\end{gather}
The proof of proposition \ref{uniformestimate} shows that equation \ref{uni} is elliptic. Thus the maximum principle implies that $\phi_1 = \phi _2$.
\section{The gradient estimate}\label{gradi}
From now onwards we restrict ourselves to solving a special case of equation \ref{maineq} on the K\"ahler manifold $(X,\omega)$ where $\omega$ satisfies the cone condition. Firstly, let $\frac{1}{b}\omega \leq \chi \leq b \omega$ be an arbitrary K\"ahler metric on $X$ having nonnegative bisectional curvature. As mentioned in the introduction, we aim at solving
\begin{gather}
\omega _{\phi} ^n = \alpha \wedge \omega _{\phi} + \eta, \label{maineqspecial}
\end{gather}
where $\alpha = f \chi ^{n-1} + \displaystyle \sum _{a=1} ^N f_a \Phi _a \wedge \bar{\Phi}_a $ and $\eta >0$. In addition we assume that $-C\alpha \leq \nabla _X \alpha \leq C\alpha$ where $X$ is a real $\chi$-unit vector. Also, from now onwards we write $\eta = h \chi^n$ where $h>0$ is a smooth function, $\alpha \wedge \beta$ locally as $\chi ^n A^{k\bar{l}} \beta _{k\bar{l}}$ for a non-negative matrix $A$, and $\omega$ locally as $\omega _{i\bar{j}} dz^i d\bar{z}^j$ where $\omega$ is used (by abuse of notation) to denote both, the K\"ahler potential as well as the metric itself. \\
\indent In order to prove a gradient bound on $\phi$ we use Blocki's technique \cite{Blocki}. Denote by $\nabla$ the Levi-Civita connection associated to $\chi$. Let $\psi = \ln (\vert \nabla  \phi \vert^2) - \gamma (\phi)$ where $\gamma (t) = \frac{1}{2} \ln (2x+1)$ is chosen so that $\gamma ^{'} >E>0$ and $-(\gamma ^{''}+(\gamma^{'})^2) > Q>0$ for two positive constants $E$ and $Q$. At the maximum point $p$ of $\psi$, $\nabla \psi = 0$ and $\psi _{k\bar{l}}$ is negative semi-definite. Without loss of generality we may assume that $\vert \nabla \phi \vert(p) \geq N$ for any $N$. Choosing normal coordinates for $\chi$ at $p$ so that $\omega _{\phi}$ is diagonal with eigenvalues $\lambda _i$ we obtain,
\begin{gather}
0 = \psi _k (p) = \frac{\displaystyle \sum _i \phi _{ik} \phi _{\bar{i}} + \phi_{i} \phi _{\bar{i}k}}{\vert \nabla \phi \vert ^2} - \gamma ^{'} \phi _k \nonumber \\
\psi _{k\bar{l}}(p) = -(\gamma ^{''}+(\gamma ^{'})^2) \phi _k \phi _{\bar{l}} - \gamma ^{'}\phi _{k\bar{l}} + \frac{\displaystyle \sum _i \phi _{ik\bar{l}} \phi _{\bar{i}} + \phi _{ik} \phi _{\bar{i} \bar{l}}  + \phi _{\bar{i}k} \phi _{i\bar{l}}+ \phi_{i} \phi _{\bar{i}k\bar{l}}}{\vert \nabla  \phi \vert ^2} + \frac{\chi ^{i \bar{j}} _{,k \bar{l}}\phi_i \phi_{\bar{j}}}{\vert \nabla  \phi \vert^2}\label{secondderiv}.
\end{gather}
Rewriting equation \ref{maineqspecial} as $1=\frac{\alpha \omega _{\phi}}{\omega _{\phi}^n} +\frac{\eta}{\omega _{\phi} ^n}$, differentiating once, and multiplying by $\phi_{\bar{i}}$, at the point $p$ we obtain the following (after using the assumption on $\nabla \alpha$).
\begin{gather}
0 \geq -L^{k\bar{l}}\phi_{\bar{i}} \left (\omega _{k\bar{l} i} + \phi _{k \bar{l} i} \right) -C\vert \nabla \phi \vert \label{eqnderi}
\end{gather}
where $L^{k\bar{l}} =  -\frac{A^{k\bar{l}}}{\lambda _1 \ldots \lambda _n}+\frac{\delta _{k \bar{l}}}{\lambda _k}$. Multiplying equation \ref{secondderiv} by $L^{k\bar{l}}$ and using equation \ref{eqnderi} we obtain,
\begin{gather}
0\geq QL^{k \bar{l}} \phi _{k} \phi _{\bar{l}}  -\gamma ^{'} L^{k\bar{l}}
(\omega_{\phi})_{k\bar{l}}+ \gamma ^{'} L^{k\bar{l}}\omega _{k \bar{l}} + L^{k \bar{l}}\frac{\chi ^{i \bar{j}} _{,k \bar{l}}\phi_i \phi_{\bar{j}}}{\vert \nabla  \phi \vert^2} -\frac{\displaystyle \sum _i L^{k\bar{l}} \left ( \omega _{k\bar{l}i} \phi _{\bar{i}} + \omega _{k\bar{l} \bar{i}} \phi_i\right)}{\vert \nabla \phi \vert^2} - \frac{C}{\vert \nabla  \phi \vert}.
\end{gather}
Noting that $L^{k\bar{l}}(\omega _{\phi})_{k\bar{l}} = -\frac{\alpha \omega _{\phi}}{\omega _{\phi}^n} + n$ (which is larger than $n-1$ and less than $n$), and the assumption on the bisectional curvature $\chi^{\al \bar{\al}}_{,\be \bar{\be}} \geq 0 \ \forall \ \al,\be$ we get
\begin{gather}
0 \geq QL^{k \bar{l}} \phi _{k} \phi _{\bar{l}} + \gamma ^{'} L^{k\bar{l}}\omega _{k \bar{l}} -\gamma ^{'}\left ( n - \frac{\alpha \omega _{\phi}}{\omega _{\phi} ^n}\right ) - 2\frac{\vert L^{k\bar{l}} \nabla \omega _{k\bar{l}}\vert}{\vert \nabla \phi \vert}  - \frac{C}{\vert \nabla  \phi \vert}. \nonumber
\end{gather}
Now we multiply on both sides by $\frac{\omega _{\phi} ^n}{n!}$ and define $\tilde{L}^{k \bar{l}} = \frac{\omega _{\phi} ^n}{n!} L^{k \bar{l}} =  A^{k\bar{l}}-\delta ^{k \bar{l}}\frac{\lambda _1 \ldots \lambda _n}{\lambda _k}$. We get
\begin{gather}
0 \geq Q\tilde{L}^{k \bar{l}} \phi _{k} \phi _{\bar{l}} + \left(\gamma ^{'} - \frac{C}{\vert \nabla  \phi \vert}\right)\tilde{L}^{k\bar{l}}\omega _{k \bar{l}} -\gamma ^{'}\left (n(h+A^{k\bar{l}}(\omega _{\phi})_{k\bar{l}}) - A^{k\bar{l}}(\omega _{\phi})_{k \bar{l}}\right )  - \frac{C \omega _{\phi}^n}{\vert \nabla  \phi \vert} \nonumber \\
\Rightarrow C \geq Q\tilde{L}^{k \bar{l}} \phi _{k} \phi _{\bar{l}} + \left(\gamma ^{'} - \frac{C}{\vert \nabla  \phi \vert}\right)\left ( -A^{k \bar{l}} \omega _{k \bar{l}} + \frac{n\omega _{\phi} ^{n-1}\omega}{\omega_{\phi}^n}(h + A^{k \bar{l}} (\omega _{\phi})_{k \bar{l}} )\right )\nonumber \\-(n-1)\gamma ^{'} A^{k\bar{l}}(\omega _{\phi})_{k \bar{l}}  - \frac{C  (h+A^{k\bar{l}}(\omega _{\phi})_{k\bar{l}})}{\vert \nabla  \phi \vert} . \label{gradint}
\end{gather}
Note that inequality \ref{gradint} implies that at $p$, the expression $\frac{n\omega _{\phi} ^{n-1} \omega}{\omega _{\phi} ^n}$ is bounded above. At $p$ if we can prove that $\tilde{L}^{k\bar{l}} \geq T\chi^{k \bar{l}} >0$ then we will have a gradient estimate on $\phi$. Actually, if we manage to prove that $\Delta _{\chi} \phi$ is bounded above, then by lemma \ref{uniformellip} we are done. If we just prove that $\frac{n\omega _{\phi} ^{n-1} \omega}{\omega _{\phi} ^n}> n-1+\ep$ for some uniform positive constant $\ep$ then inequality \ref{gradint} implies that $\omega _{\phi}^n$ is bounded above and hence by the lower bound on $\omega_{\phi}$ so is $\Delta _{\chi} \phi$ above. Indeed, the following lemma coupled with this observation completes the proof of the gradient estimate.
\begin{lemma}
If at a point $q$, $\omega _{\phi} \geq R \omega > 0$, $\Delta _{\chi} \phi \rightarrow \infty$ then $\frac{n\omega _{\phi} ^{n-1} \omega}{\omega _{\phi} ^n} > n-1 +\ep$ for some uniform positive constant $\ep$.
\label{implem}
\end{lemma}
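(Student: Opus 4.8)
The plan is to work at $q$ in $\chi$-orthonormal coordinates that simultaneously diagonalise $\omega _{\phi}$, with eigenvalues $\lambda _1 \le \cdots \le \lambda _n$. In such a frame one has the standard identity $\frac{n\omega _{\phi}^{n-1}\wedge\omega}{\omega _{\phi}^n}=\tr_{\omega _{\phi}}\omega=\sum _i \frac{\omega _{i\bar i}}{\lambda _i}$, and the comparability $\frac1b\omega\le\chi\le b\omega$ gives $\lambda _i\ge R/b$ together with $\frac1b\le\omega _{i\bar i}\le b$ for every $i$. Dividing equation \ref{maineqspecial} by $\chi^n$ turns it into the scalar relation $\lambda _1\cdots\lambda _n=\sum _k A^{k\bar k}\lambda _k+h$, in which the quantities $A^{k\bar k}\ge 0$ and $h>0$ are bounded by controlled constants because $\alpha,\eta$ are fixed smooth forms on a compact manifold and the frame is $\chi$-orthonormal. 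This algebraic identity is the engine of the argument.

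First I would show that at most one eigenvalue can be large. Estimating the right-hand side by $\Lambda _0\sum _k\lambda _k+h_0\le\Lambda _0 n\lambda _n+h_0$ and dividing by $\lambda _n\ge R/b$ yields $\prod _{i<n}\lambda _i\le K$ for a controlled constant $K$; since each remaining factor is $\ge R/b$, each of $\lambda _1,\dots,\lambda _{n-1}$ is in turn bounded above by some $K'$. Hence only $\lambda _n$ can be large, and because $\Delta _{\chi}\phi=\sum _i\lambda _i-\tr_{\chi}\omega$ with $\tr_{\chi}\omega$ bounded, the hypothesis $\Delta _{\chi}\phi\to\infty$ forces precisely $\lambda _n\to\infty$ while $\lambda _1,\dots,\lambda _{n-1}$ remain in $[R/b,K']$. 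Dividing the scalar relation by $\lambda _n$ then gives the refined asymptotic $\prod _{i<n}\lambda _i=A^{n\bar n}+O(1/\lambda _n)$, so the product of the bounded eigenvalues is controlled by the single number $A^{n\bar n}$, the component of $A$ in the blow-up direction $\partial _n$.

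The decisive input is the cone condition. Testing $n\omega^{n-1}-\alpha>0$ against $\beta=\sqrt{-1}\,dz^n\wedge d\bar z^n$, and computing $n\omega^{n-1}\wedge\beta=\chi^n\det\omega'$ while $\alpha\wedge\beta=\chi^n A^{n\bar n}$, produces $A^{n\bar n}<\det\omega'$, where $\omega'$ is the upper-left $(n-1)\times(n-1)$ block of $(\omega _{i\bar j})$; by compactness this gap is uniform. Hadamard's inequality for the positive-definite block gives $\det\omega'\le\prod _{i<n}\omega _{i\bar i}$, whence $\prod _{i<n}\omega _{i\bar i}>A^{n\bar n}+\ep _1$ uniformly. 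Finally, discarding the vanishing $n$-th term and applying the arithmetic-geometric mean inequality,
\[
\tr_{\omega _{\phi}}\omega\ \ge\ \sum _{i<n}\frac{\omega _{i\bar i}}{\lambda _i}\ \ge\ (n-1)\left(\frac{\prod _{i<n}\omega _{i\bar i}}{\prod _{i<n}\lambda _i}\right)^{1/(n-1)},
\]
and since $\prod _{i<n}\lambda _i\to A^{n\bar n}<\prod _{i<n}\omega _{i\bar i}$ the bracketed ratio exceeds $1$ by a controlled amount once $\lambda _n$ is large enough, yielding $\tr_{\omega _{\phi}}\omega>n-1+\ep$. The main obstacle is exactly the coupling in this final stage: one must know that only a single eigenvalue degenerates, so that the AM--GM inequality is applied to the remaining $n-1$ factors, and that the cone condition pins the product of those factors strictly below the corresponding product of the $\omega _{i\bar i}$. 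Matching these two facts into a single uniform constant $\ep$ is the heart of the argument, whereas the surrounding estimates are routine.
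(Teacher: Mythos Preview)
Your argument is correct and follows essentially the same route as the paper: working in $\chi$-normal coordinates diagonalising $\omega_\phi$, you use the scalar identity $\prod\lambda_i=\sum A^{k\bar k}\lambda_k+h$ to see that only the top eigenvalue can blow up and that the product of the remaining ones tends to $A^{n\bar n}$, then combine the cone condition (tested in the blow-up direction) with Hadamard's inequality to get $\prod_{i<n}\omega_{i\bar i}>A^{n\bar n}$ uniformly, and conclude via AM--GM. The only cosmetic differences from the paper are the reversed ordering of the eigenvalues and your explicit preliminary bound $\prod_{i<n}\lambda_i\le K$ (the paper instead solves directly for the large eigenvalue), but the mechanism and ingredients are identical.
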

\begin{proof}
Without loss of generality we assume that $\lambda _1 \geq \lambda _2\ldots \lambda _n$. Equation \ref{maineqspecial} implies that at least $\lambda _n$ is bounded above at $p$. Notice that the cone condition implies
\begin{gather}
(n\omega ^{n-1})_{2\bar{2}\ldots n\bar{n}} > n! A^{1\bar{1}}.
\end{gather}
By the classical Hadamard inequality for matrices (see \cite{Info} for instance) $\omega _{2\bar{2}} \ldots \omega_{n\bar{n}} \geq \frac{(\omega ^{n-1})_{2\bar{2}\ldots}}{(n-1)!}$. Thus
\begin{gather}
\omega _{2\bar{2}} \ldots \omega _{n\bar{n}} > A^{1 \bar{1}}. \label{phew}
\end{gather}
Solving for $\lambda_1$ from equation \ref{maineqspecial} we see that $\lambda _1 = \frac{f+\displaystyle \sum _{k \neq 1} A^{k \bar{k}}}{\lambda _2 \ldots \lambda _n - A^{1\bar{1}}}$. This coupled with the lower bound on the $\lambda_i$ and the assumption that $\lambda _1 \rightarrow \infty$ implies that $\lambda _2 \ldots \lambda _n \rightarrow A^{1\bar{1}}$. Therefore,
\begin{gather}
\frac{n\omega _{\phi}^{n-1} \omega}{\omega_{\phi}^n} = \displaystyle \sum _{k=1} ^n \frac{\omega _{k\bar{k}}}{\lambda _k} \rightarrow \displaystyle \sum _{k=2} ^n \frac{\omega _{k\bar{k}}}{\lambda _k} \nonumber \\
\geq (n-1)\displaystyle \left ( \Pi _{k=2} ^{n} \frac{\omega _{k \bar{k}}}{\lambda _k} \right )^{1/(n-1)} \rightarrow (n-1)\displaystyle \left (  \frac{\displaystyle \Pi _{k=2} ^{n}\omega _{k \bar{k}}}{A^{1\bar{1}}} \right )^{1/(n-1)}, \nonumber
\end{gather}
where we used the AM-GM inequality. Using inequality \ref{phew} we are done.
\end{proof}
\section{Higher order estimates}\label{high}
\indent In this section we prove the partial Laplacian estimate. In addition to the assumptions in section \ref{gradi} we assume that $\alpha$ is parallel with respect to $\chi$.
\begin{rema}\label{exam}
It is but natural to wonder whether there are any forms $\alpha$ that satisfy the desired requirements other than multiples of $\chi ^{n-1}$. If $X$ is a complex torus and $\chi$ the flat metric, then $\alpha = dz^{1}\wedge d\bar{z}^1 \ldots dz^{n-1} \wedge d\bar{z} ^{n-1}$ furnishes a non-trivial degenerate example. In general, one can take a locally hermitian symmetric space or a product of any $2$ manifolds with $\chi$ being the product metric to produce lots of examples using $(1,1)$-forms. (Note that in our case we also need $\chi$ to have nonnegative bisectional curvature.) In fact, it is known that on manifolds other than local products or locally hermitian symmetric spaces the only such forms are indeed multiples of $\chi^{n-1}$. According to Bryant \cite{Bry}, this result follows from the classification of Riemannian holonomy groups.
\end{rema}
\textbf{Partial Laplacian bound} : We now prove an upper bound on $\frac{\alpha\omega _{\phi}}{\chi^n}$. As in \cite{MAtorus} we use the function $\Psi = \frac{\alpha \omega _{\phi}}{\chi^n} - \mu \phi$ where $\mu$ is a constant that will be chosen later. If we prove that $\Psi$ is bounded above then we are done. As before, at the maximum point $p$ of $\Psi$, $\Psi _k =0$ and $\Psi _{k\bar{l}}$ is negative semi-definite. We choose normal coordinates for $\chi$ at $p$ and make sure that $\omega _{\phi}$ is diagonal at $p$ with eigenvalues $\lambda_i$. Differentiation of $\Psi$ yields the following.
\begin{gather}
0 = \nabla _k \Psi = \Psi _k = \frac{\alpha \omega _{\phi,k}}{\chi^n} - \mu \phi _k \nonumber  \\
\Psi _{k\bar{l}} = \nabla _{\bar{l}}\nabla _k \Psi =  \frac{\alpha  \nabla _{\bar{l}}\nabla _ k\omega _{\phi}}{\chi ^n} - \mu \nabla _{\bar{l}}\nabla _k \phi \nonumber \\
\geq \frac{\alpha  \omega _{\phi,k\bar{l}}}{\chi ^n} - \mu (\omega_{\phi})_{k\bar{l}} +\mu \omega _{k\bar{l}}  \label{psisecondone}
\end{gather}
Differentiating $1 = \frac{\alpha \omega _{\phi}}{\omega _{\phi} ^n} + \frac{\eta}{\omega _{\phi}^n}$ twice, multiplying by $A^{k\bar{l}}$ and summing over $k=l$ we obtain
\begin{gather}
0 = \frac{\alpha \omega _{\phi,k}}{\omega _{\phi}^n} -\frac{n\omega _{\phi}^{n-1}\omega _{\phi,k}}{\omega _{\phi}^n}+\frac{\eta_{,k}}{\omega_{\phi}^n} \label{onederiv} \\
0 \geq   \frac{A^{k\bar{l}}\alpha \nabla _{\bar{l}}\nabla _ k\omega _{\phi}}{\omega_{\phi}^n} - \frac{A^{k\bar{l}}n\omega_{\phi} ^{n-1} \nabla _{\bar{l}}\nabla _ k\omega _{\phi}}{\omega _{\phi}^n} - A^{k\bar{l}}\frac{\eta_{,k}}{\omega_{\phi}^n} \frac{n \omega _{\phi}^{n-1}\omega _{\phi,\bar{l}}}{\omega_{\phi}^n}  - A^{k\bar{l}}\frac{\eta_{,\bar{l}}}{\omega_{\phi}^n} \frac{n \omega _{\phi}^{n-1}\omega _{\phi,k}}{\omega_{\phi}^n}+\frac{\nabla _{\bar{l}}\nabla _ k \eta}{\omega_{\phi}^n} \nonumber \\
\geq \frac{A^{k\bar{l}}\alpha \omega _{\phi,k\bar{l}}}{\omega_{\phi}^n} - \frac{A^{k\bar{l}}n\omega_{\phi} ^{n-1} \omega _{\phi,k\bar{l}}}{\omega _{\phi}^n} 
- C\left (\frac{\alpha \omega _{\phi}}{\omega_{\phi}^n} + n\right ) - A^{k\bar{l}}\frac{\eta_{,k}}{\omega_{\phi}^n} \frac{n \omega _{\phi}^{n-1}\omega _{\phi,\bar{l}}}{\omega_{\phi}^n}  - A^{k\bar{l}}\frac{\eta_{,\bar{l}}}{\omega_{\phi}^n} \frac{n \omega _{\phi}^{n-1}\omega _{\phi,k}}{\omega_{\phi}^n}-\frac{C\eta}{\omega_{\phi}^n},
\label{eqsecondtemp}
\end{gather}
where we used lemma \ref{concavitylem}. At this juncture we use equations \ref{psisecondone} and \ref{onederiv} to get
\begin{gather}
C \geq  \frac{A^{k\bar{l}}\alpha \omega _{\phi,k\bar{l}}}{\omega_{\phi}^n} - \frac{A^{k\bar{l}}n\omega_{\phi} ^{n-1} \omega _{\phi,k\bar{l}}}{\omega _{\phi}^n}- \mu C \left (  \frac{\chi^n}{\omega _{\phi}^n} \right )^2 .
\label{eqsecond}
\end{gather}
We multiply equation \ref{psisecondone} by $L^{k\bar{l}}$ and sum to obtain (after substituting in \ref{eqsecond})
\begin{gather}
C \geq - \mu C \left (  \frac{\chi^n}{\omega _{\phi}^n} \right )^2 - L^{k\bar{l}} \mu(\omega _{\phi})_{k\bar{l}} + L^{k\bar{l}} \mu \omega _{k\bar{l}} \nonumber \\
\Rightarrow \frac{C}{\mu} \geq -C \left (  \frac{\chi^n}{\omega _{\phi}^n} \right )^2 - \left (n-\frac{\alpha \omega _{\phi}}{\omega _{\phi}^n}\right ) + \left ( \frac{n \omega _{\phi}^{n-1}\omega}{\omega _{\phi}^n}  - \frac{\alpha \omega}{\omega _{\phi}^n}\right )\nonumber \\
= -C \left (  \frac{\chi^n}{\omega _{\phi}^n} \right )^2 - \left (n\frac{\eta}{\omega_{\phi}^n} + (n-1)\frac{\alpha \omega _{\phi}}{\omega _{\phi}^n}\right ) + \left ( \frac{n \omega _{\phi}^{n-1}\omega}{\omega _{\phi}^n}  - \frac{\alpha \omega}{\omega _{\phi}^n}\right )\label{aftersub}
\end{gather}
Since $\eta > 0$ we know that $\omega _{\phi}^n$ is bounded from below. Moreover, $0\leq \frac{\alpha \omega _{\phi}}{\omega _{\phi} ^n} \leq 1$. Therefore $\frac{n\omega _{\phi}^{n-1} \omega}{\omega _{\phi}^n} < C$. This implies a lower bound, $\omega _{\phi} > R\omega >0$. Since we are assuming that $\Delta _{\chi} \phi \rightarrow \infty$,
\begin{gather}
\frac{C}{\mu} \geq  \frac{n\omega_{\phi}^{n-1}\omega}{\omega _{\phi}^n} +(1-n)\frac{\alpha \omega _{\phi}}{\omega _{\phi}^n} \nonumber
\end{gather}
Using lemma \ref{implem} we see that $\frac{n\omega_{\phi}^{n-1}\omega}{\omega _{\phi}^n} > n-1 + \epsilon$ for some uniform positive constant $\epsilon$. Therefore $\frac{C}{\mu} \geq \epsilon$. Choosing $\mu$ to be large enough we arrive at a contradiction. This proves the partial Laplacian estimate. \\
\indent In the case when $\alpha >0$ the partial Laplacian estimate implies an estimate on $\Delta _{\chi} \phi$.  \\

\textbf{$C^{2,\ga}$ estimates} : The previously established partial Laplacian bound when $\alpha >0$ implies by lemma \ref{uniformellip} that the equation is uniformly elliptic. In fact, it also implies that $1=\frac{\alpha \omega _{\phi}}{\omega _{\phi}^n}+\frac{\eta}{\omega_{\phi}^n}$ is uniformly elliptic. Lemma \ref{concavitylem} implies that the equation is also convex. Thus the (complex version of) the Evans-Krylov theory \cite{Siu} is applicable and furnishes a $C^{2,\ga}$ estimate. This completes the proof of theorem \ref{mainthm}.
\section{Applications}\label{app}
\subsection{Representation of the top Chern character} \label{rep}\textbf{} \\
\indent Given a $(k,k)$ form $\eta$ representing the $k$th Chern character class $[tr((\Theta)^k)]$ of a vector bundle on a compact complex manifold (where $\frac{\sqrt{-1}F}{2\pi} = \Theta$ and $F$ is the curvature of a connection), it is natural to ask whether there is a metric $h$ on the vector bundle whose induced Chern connection realises
\begin{gather}
\tr\left(\Theta ^k \right)=\eta. \label{Chern}
\end{gather}
As phrased this question seems almost intractable. It is not even obvious as to whether there is \emph{any} connection satisfying this requirement, leave aside a Chern connection. Work along these lines was done by Datta in \cite{Datta} using the h-principle. Therefore, it is more reasonable to ask whether equality can be realised for the top Chern character form. To restrict ourselves further we ask whether any given metric $h_0$ may be conformally deformed to $h=h_0 e^{-\phi}$ satisfying the desired requirement. In the case of a line bundle $L$ (where the only choice we have is conformal deformations) equation \ref{Chern} boils down to the PDE
\begin{gather}
\mathrm{tr}\left ( \Theta _0 + \frac{\sqrt{-1}}{2\pi} \pbp \phi \right ) ^n = \eta \nonumber
\end{gather}
When $\Theta _0 >0$ and $\eta>0$ this is the usual Monge-Amp\'ere equation solved by Yau \cite{Yau}. In general, one gets a complicated fully nonlinear PDE which reduces to equation \ref{maineq} in some cases. It is clear from the case of a line bundle that for the general case of a vector bundle, unfortunately quite a few potentially unnatural positivity requirements will have to be made on the curvature $\Theta_0$ and the form $\eta$. Note that the local problem was dealt with in \cite{Kryl}. In \cite{MAtorus} an existence result was proven on complex $3$-tori. Using the result in \cite{Weisun} we prove theorem \ref{Coro1}.\\

\emph{Proof of theorem \ref{Coro1}}: According to theorem $1.1$ in \cite{Weisun} the equation
\begin{gather}
\omega _{\phi}^n = e^b\displaystyle \psi \sum _{1}^n c_i \chi ^i \omega _{\phi}^{n-i} \label{weieqn}
\end{gather}
on a compact K\"ahler manifold $(X,\omega)$ has a unique smooth solution $\phi, b$ satisfying $\omega _{\phi} >0$ and $n \omega_{\phi}^{n-1} - \psi \displaystyle \sum _{i=1}^{n-1}(n-i) c_i \chi ^i \omega _{\phi}^{n-i-1}>0$ if there exists a smooth function $v$ such that
\begin{gather}
\omega _v ^n \leq \displaystyle \psi \sum _{i=i}^n c_i \chi^ i \omega _v ^{n-i}. \label{condit}
\end{gather}
Actually, if $\displaystyle \int _{X}\omega ^n = \displaystyle \int _X \psi \sum _{1}^n c_i \chi ^i \omega ^{n-i}$ and $\psi \chi^ i$ is closed, then one can choose $v=0$ and $\tilde{\psi}=e^{-\tilde{b}} \psi$ where $b$ is small enough for condition \ref{condit} to hold. This shows that under such circumstances, equation \ref{weieqn} has a unique solution with $b=0$. Indeed, expanding equations \ref{Cherneq} and \ref{weieqn} (with $b=0$)  we get
\begin{gather}
\displaystyle \sum _{r=0}^n \dbinom{n}{r} \frac{1}{k}\mathrm{tr}\left ( \Theta _0 ^r \right ) \left(\sqrt{-1} \pbp \phi \right)^{n-r}  - \frac{\eta}{k} =0 \nonumber \\
\displaystyle \sum _{r=0}^n \omega ^r \left(\sqrt{-1} \pbp \phi \right)^{n-r} - \psi \sum _{k=1} ^n\sum _{r=0} ^{n-k} c_k \chi^ k \dbinom{n-k}{r}\omega ^r \left(\sqrt{-1} \pbp \phi \right)^{n-k-r} = 0 \nonumber \\
\end{gather}
Comparing the two equations we get the desired result. \qed \\
\textbf{} \\
\subsection{A special Lagrangian type equation} \label{spec} \textbf{} \\
\indent According to superstring theory the spacetime of the universe is constrained to be a product of a compact Calabi-Yau three-fold and a four dimensional Lorentzian manifold. A ``duality" relates the geometry of this Calabi-Yau manifold with another ``mirror" Calabi-Yau manifold. From a differential geometry standpoint this maybe thought of (roughly) as a relationship between the existence of ``nice" metrics on a line bundle on one Calabi-Yau manifold  and special Lagrangian submanifolds of the other Calabi-Yau manifold. Using the Fourier-Mukai transform, Leung-Yau-Zaslow showed \cite{leung} that this mirror symmetry implies that equation \ref{jacobeq} ought to be satisfied in some cases. In \cite{special}, Jacob and Yau showed that given an ample line bundle $L$ over a compact K\"ahler manifold with non-negative orthogonal bisectional curvature, $L^k$ admits a solution to equation \ref{jacobeq}. However, the assumption of non-negative orthogonal bisectional curvature is not desirable if one wants to apply such a result to general Calabi-Yau manifolds. Here we attempt to partially address this issue by restricting our attention to $3$-folds.\\

\emph{Proof of theorem \ref{Coro2}} : Equation \ref{jacobeq} can be written using $\Theta = \sqrt{-1}F$ as
\begin{gather}
-\Theta ^ 3 +3\omega ^2 \Theta = \tan (\hat{\theta}) \left ( \omega ^3 - 3 \Theta ^2 \omega \right ). \nonumber
\end{gather}
Grouping terms together we see that it is equivalent to
\begin{gather}
\Omega _{\phi} ^3 - 3\omega ^2 \Omega _{\phi} \sec^2 (\hat{\theta}) -2 \omega ^3 \tan (\hat{\theta})\sec^2 (\hat{\theta}) = 0,
\end{gather}
where $\Omega _{\phi} = \Omega + \sqrt{-1} \pbp \phi = \Theta _0 -\omega \tan (\hat{\theta}) + \sqrt{-1} \pbp \phi$. Comparing this equation to the theorem in \cite{Weisun} we see that if $\Omega >0$, $\tan (\hat{\theta}) >0$, and the cone condition $\Omega ^2 - \omega ^2 \sec ^2 (\hat{\theta}) > 0$ is satisfied, then the equation has a unique smooth solution upto a constant multiple. \qed \\

\indent The conditions imposed on $\Omega$ in theorem \ref{Coro2} are reminiscent of the ``stability" condition in \cite{special}. Here is a concrete example of a Calabi-Yau manifold where the theorem is applicable : \\
\indent  Let $X$ be $\mathbb{C}{\Lambda} \times K$ where $K$ is a projective K3 surface with Picard group generated by an ample line bundle $L$ (for example $K$ can be a non-singular degree $4$ surface in $\mathbb{P}^3$. Let $\omega$ be the product of the flat metric on the torus and the unique Calabi-Yau metric in the K\"ahler class $[L]+\ep [\gamma]$ where $[\gamma]$ is any cohomology class and $\epsilon$ is chosen to be small enough (as to how small can be determined easily) for the example to work. Endow $L$ with a metric $h_0$ with positive curvature $F_0$. Choose $k$ to be large enough so that for $(L^k, h_0 ^k)$, $\tan (\hat{\theta_k}) > 0$. Indeed,
$$\tan (\hat{\theta_k}) = \frac{\displaystyle \int \left (k^3\Theta_0 ^3 - 3k\Theta_0 (\Theta _0 + \ep \gamma) ^2 \right ) }{\displaystyle \int \left (3k^2 \Theta _0 ^2 (\Theta _0 + \ep \gamma) - (\Theta _0 + \ep \gamma )^3 \right )}.$$
So if $k\geq 2$ and $\ep$ small enough, then $\tan (\hat{\theta})>0$. Note that as $k \rightarrow \infty$, $\tan (\hat{\theta}_k)$ grows linearly in $k$.\\
Notice that if $\Theta _0 > 2 \omega \tan (\hat {\theta})$ then the cone condition is definitely satisfied for large enough $k$. Indeed,
\begin{gather}
\Omega _0 ^2 - \omega ^2 \sec ^2 (\hat{\theta}) = \Theta _0 ^2 + \omega ^2 \tan ^2 (\hat{\theta}) -2 \Theta _0 \omega \tan (\hat{\theta}) - \omega ^2 \sec ^2 (\hat{\theta}) \nonumber \\
= \Theta _0 ^2  -2 \Theta _0 \omega \tan (\hat{\theta}) - \omega ^2 = \Theta_0 ( \Theta _0 - 2\omega \tan (\hat{\theta})) - \omega ^2 \nonumber
\end{gather}
Hence, our requirement boils down to making sure that $k\Theta _0 - 2 \omega \tan (\hat{\theta}_k) >0$ for large $k$.
\begin{gather}
k\Theta _0 - 2 \omega \tan (\hat{\theta}_k) = k\Theta _0 - 2 (\Theta _0 + \ep \gamma) \frac{\displaystyle \int \left (k^3\Theta_0 ^3 - 3k\Theta_0 (\Theta _0 + \ep \gamma) ^2 \right ) }{\displaystyle \int \left (3k^2 \Theta _0 ^2 (\Theta _0 + \ep \gamma) - (\Theta _0 + \ep \gamma )^3 \right )}
\end{gather}
For large enough $k$ we just have to look at
\begin{gather}
k\Theta _0 - \frac{2}{3}k (\Theta _0 + \ep \gamma) \frac{\displaystyle \int \Theta_0 ^3 }{\displaystyle \int \Theta _0 ^2 (\Theta _0 + \ep \gamma)}
\end{gather}
which is obviously positive if $\ep$ is small enough.
\section*{Acknowledgements}
 The author thanks Wei Sun for answering questions about his paper and the anonymous referee for useful suggestions.

\end{document}